\documentclass[reqno,a4paper,11pt]{amsart}

\usepackage[a4paper, margin=1in]{geometry}
\usepackage[utf8]{inputenc}
\usepackage[T1]{fontenc}
\usepackage[english]{babel}
\usepackage[babel]{microtype}
\usepackage[dvipsnames]{xcolor}
\usepackage{amssymb}
\usepackage{mathtools}
\usepackage{commath}
\usepackage{amsthm}
\usepackage{graphicx}

\theoremstyle{plain}
\newtheorem{theorem}{Theorem}[section]
\newtheorem{lemma}[theorem]{Lemma}
\newtheorem{proposition}[theorem]{Proposition}
\newtheorem{question}[theorem]{Question}
\newtheorem{problem}[theorem]{Problem}
\theoremstyle{definition} 
\newtheorem{remark}[theorem]{Remark}

\usepackage[pdfborderstyle={/S/U/W 0},unicode]{hyperref}
\hypersetup{
    colorlinks=true,
    linkcolor=blue!85!black,
    citecolor=blue!85!black,
    urlcolor=blue!85!black,
}

\numberwithin{equation}{section}

\usepackage{silence}




\renewcommand{\le}{\leqslant}
\renewcommand{\ge}{\geqslant}
\renewcommand{\d}{\mathrm{d}}
\newcommand{\ve}{\varepsilon}
\newcommand{\brax}[1]{\left(#1\right)}
\newcommand{\intdist}[1]{\left\|#1\right\|}
\newcommand{\Case}[1]{\smallskip \noindent \textit{Case #1.}}

\begin{document}

\title[Point sets avoiding near-integer distances]{Point sets avoiding near-integer distances}

\author[R. Goenka]{Ritesh Goenka}
\address{Mathematical Institute, University of Oxford, Oxford, United Kingdom}
\email{ritesh.goenka@maths.ox.ac.uk}

\author[K. Moore]{Kenneth Moore}
\address{HUN-REN Alfr\'ed R\'enyi Institute of Mathematics, Budapest, Hungary}
\email{kjmoore@renyi.hu}

\begin{abstract}
    Let $d \in \mathbb{N}$, $\delta \in (0, 1/2)$, and $X > 0$. Denote by $N_d(X, \delta)$ the maximum number of points in a subset of the closed Euclidean ball of radius $X$ in $\mathbb{R}^d$ such that every pairwise distance is at least $\delta$ away from any integer. In the planar case, S\'ark\"ozy proved that for every $\varepsilon > 0$, $N_2(X, \delta) = \Omega_\delta(X^{1/2-\varepsilon})$ as $X \rightarrow \infty$ whenever $\delta$ is sufficiently small in terms of $\varepsilon$, while Konyagin proved the almost matching upper bound $N_2(X,\delta) = O_\delta(X^{1/2})$.
    
    We study this problem in higher dimensions, addressing a question of Erd\H{o}s and S\'ark\"ozy. Extending S\'ark\"ozy's construction, we show that for every $\varepsilon > 0$, $N_3(X, \delta) = \Omega_\delta(X^{1-\varepsilon})$ for $\delta$ sufficiently small in terms of $\varepsilon$. We also provide a lifting lemma from integer distance sets to sets avoiding near-integer distances via bilipschitz embeddings of snowflaked Euclidean spaces. This allows us to prove a linear lower bound $N_4(X,\delta) = \Omega_\delta(X)$ for all sufficiently small $\delta$. Finally, adapting Konyagin's approach, we prove the upper bound $N_d(X, \delta) = O_{d, \delta}(X^{d/2})$ for all $d \in \mathbb{N}$.
\end{abstract}

\subjclass[2020]{Primary: 52C10; Secondary: 51K05, 51F30, 42A05, 42B10}
\keywords{integer distances, near-integer distances, forbidden distances, Fourier-analytic methods, trigonometric polynomials, snowflake embeddings}

\maketitle

\section{Introduction}
\label{sec:intro}

For $d \in \mathbb{N}$, $\delta \in (0, 1/2)$, and $X > 0$, let $N_d(X, \delta)$ denote the largest integer $n$ such that there exist points $p_1, p_2, \dots, p_n \in B_d(0, X)$ satisfying
\begin{equation}
\label{eqn:main}
    \intdist{\, |p_i - p_j| \,} \ge \delta, \quad \text{for all } i \ne j,
\end{equation}
where $B_d(0, X)$ denotes the closed Euclidean ball of radius $X$ in $\mathbb{R}^d$, $| \cdot |$ denotes the Euclidean norm, and $\intdist{t}$ denotes the distance from $t$ to the nearest integer. Note that $N_d(X, \delta)$ is finite and well defined since the point set $P = \set{p_i \,\colon\, i \in [n]}$ must be $\delta$-separated. Throughout the paper, we use asymptotic notation with respect to $X \rightarrow \infty$.

Erd\H{o}s (see~\cite{Erd,EG}) conjectured that $N_2(X, \delta) = \omega(1)$ and $N_2(X, \delta) = o(X)$ as $X \rightarrow \infty$ while keeping $\delta$ constant. The first conjecture was proved by Graham (see \cite{Sar2}) who showed that $N_2(X, \delta) = \Omega(\log X)$ for $\delta \le 1/10$ and the second was proved by S\'ark\"ozy~\cite{Sar1} who showed that
\begin{equation}
\label{eqn:SarUpper}
    N_2(X, \delta) = O_\delta(X/\log \log X).
\end{equation}
Both of these bounds were subsequently improved: S\'ark\"ozy~\cite{Sar2} showed that for every $\ve > 0$, $N_2(X, \delta) = \Omega_\delta(X^{1/2-\ve})$ for $\delta$ small enough in terms of $\ve$ and Konyagin~\cite{Kon} showed the almost matching upper bound $N_2(X, \delta) = O_\delta(X^{1/2})$. 

In contrast to the planar case, the literature on this problem in higher dimensions is sparse. Erd\H{o}s and S\'ark\"ozy~\cite{ES} asked the following question about the growth of $N_d(X, \delta)$.

\begin{question}
\label{ques:main}
    How rapidly does $N_d(X, \delta)$ increase for $d \rightarrow \infty$? Does there exist a positive integer $d$ such that
    \begin{equation*}
        \lim_{X \rightarrow \infty} \frac{N_d(X, \delta)}{X} = \infty
    \end{equation*}
    for some $\delta > 0$? In view of \eqref{eqn:SarUpper}, this would imply
    \begin{equation*}
        \lim_{X \rightarrow \infty} \frac{N_d(X, \delta)}{N_2(X, \delta)} = \infty.
    \end{equation*}
\end{question}

We first extend Konyagin's Fourier-analytic approach to higher dimensions. This gives a general upper bound for $N_d(X, \delta)$ in every dimension. The precise exponent depends on the congruence class of $d$ modulo $4$, which arises from the inapplicability of our Fourier-analytic argument in dimensions $d \equiv 3 \pmod{4}$.

\begin{theorem}
\label{thm:upper}
    For every $d \in \mathbb{N}$ and $\delta \in (0, 1/2)$, we have $N_d(X, \delta) = O_{d,\delta}(f_d(X))$, where
    \begin{equation*}
        f_d(X) \coloneqq \begin{cases}
            X^{\frac{d}{2}}, &\text{ if } d \equiv 3 \pmod{4},\\
            X^{\frac{d-1}{2}} \log X, &\text{ if } d \equiv 0 \pmod{4},\\
            X^{\frac{d-1}{2}}, &\text{ otherwise}.
        \end{cases}
    \end{equation*}
\end{theorem}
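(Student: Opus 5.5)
The plan is to adapt Konyagin's positive-definiteness argument: build an auxiliary sum over pairs that is $\le 0$ because every pairwise distance is far from an integer, and $\ge 0$ (up to small errors) because part of it is positive definite. The key input is the Fourier transform of the normalized surface measure $\sigma_k$ on the sphere of radius $k$ in $\mathbb{R}^d$. For every finite point set,
\begin{equation*}
  \sum_{i,j} \widehat{\sigma_k}(p_i-p_j) = \int \Big|\sum_i e^{2\pi i\, p_i\cdot\xi}\Big|^2 \, \d\sigma_k(\xi) \ge 0 .
\end{equation*}
By the Bessel asymptotics, for $r=|x|\ge\delta$,
\begin{equation*}
  \widehat{\sigma_k}(x) = c_d\,(kr)^{-\frac{d-1}{2}}\cos\!\Big(2\pi k r-\tfrac{(d-1)\pi}{4}\Big) + O\big((kr)^{-\frac{d+1}{2}}\big).
\end{equation*}
So the weighted phases $w(r_{ij})\cos(2\pi k r_{ij}-\varphi_d)$, with $r_{ij}=|p_i-p_j|$, $w(r)=r^{-(d-1)/2}$ and $\varphi_d=(d-1)\pi/4$, are positive definite up to an error term and the diagonal.

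\textbf{Step 1 (one-variable kernel).} I need a trigonometric polynomial
\begin{equation*}
  h(t)=a_0+\sum_{k=1}^K b_k\cos(2\pi k t-\varphi_d), \qquad a_0>0,\quad b_k\ge 0,
\end{equation*}
with $h(t)\le 0$ whenever $\intdist{t}\ge\delta$. When $\varphi_d\equiv 0\pmod\pi$, i.e.\ $d\equiv1\pmod 4$, I take a Fejér kernel $F_N$ minus a constant $c\in(0,1)$. Then $a_0=1-c>0$, and $F_N\le c$ off $\delta$-neighbourhoods of the integers once $N\gtrsim_\delta 1$. For $\varphi_d=\pi/4$ and $3\pi/4$ (that is, $d\equiv 2,0\pmod 4$) I would mix a cosine Fejér kernel with a suitable sine series so that the coefficients in the rotated basis stay nonnegative. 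I expect the case $3\pi/4$ to cost something, which is plausibly the source of the $\log X$. For $\varphi_d=\pi/2$ ($d\equiv 3\pmod 4$) the method fails outright, since $h(t)+h(-t)=2a_0>0$ forbids $h\le 0$ at both $\pm t$. That case is handled separately in Step 4.

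\textbf{Step 2 (double counting).} Consider $S=\sum_{i\ne j} w(r_{ij})\,h(r_{ij})$. Then $S\le 0$ by \eqref{eqn:main}. Expanding $h$, the constant part gives
\begin{equation*}
  a_0\sum_{i\ne j} w(r_{ij}) \gtrsim a_0\, n^2 X^{-\frac{d-1}{2}},
\end{equation*}
since $r_{ij}\le 2X$. For each $k$, the oscillatory part equals $c_d^{-1}k^{(d-1)/2}\sum_{i\ne j}\widehat{\sigma_k}(p_i-p_j)$ plus Bessel errors. By positive definiteness this is at least $-n\,k^{(d-1)/2}$, the contribution of the diagonal terms. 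As $K=O_\delta(1)$, we get $n^2X^{-(d-1)/2}\lesssim_\delta n+E$, where $E$ is the total error.

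\textbf{Step 3 (errors: the main obstacle).} The error is
\begin{equation*}
  E\lesssim \sum_{i\ne j} r_{ij}^{-\frac{d-1}{2}}\, r_{ij}^{-1}.
\end{equation*}
Using $\delta$-separation and a dyadic shell count, $\#\{j: r_{ij}\sim R\}\lesssim_\delta R^{d}$, so
\begin{equation*}
  \sum_j r_{ij}^{-\frac{d+1}{2}} \lesssim \int_\delta^{2X} r^{d-1-\frac{d+1}{2}}\,\d r \asymp X^{\frac{d-1}{2}} .
\end{equation*}
This is of the same order as the main term and is therefore too large. To fix it I would use the exact Bessel expansion, i.e.\ the Hankel asymptotic series, to higher order, or replace $\sigma_k$ by a smoothed radial measure. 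Its transform then has leading oscillation $r^{-(d-1)/2}\cos(2\pi kr-\varphi_d)$, with the lower-order terms absorbed into positive-definite pieces. A convenient way is to average over $k$ in a window, which keeps positive definiteness while damping the error terms. I expect a harmonic-type sum to survive exactly when the phase correction cannot be absorbed; this would explain the $\log X$ for $d\equiv 0\pmod 4$. Granting this, $n\lesssim_{d,\delta} X^{(d-1)/2}$, with the stated logarithm when $d\equiv 0\pmod 4$.

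\textbf{Step 4 ($d\equiv 3\pmod 4$).} Here I would only aim for $X^{d/2}$. The idea is to run the argument with the weight $w(r)=r^{-d/2}$ and a genuinely $d$-dimensional radial positive-definite function, for instance the radial Fourier transform of a ball indicator. This avoids the half-integer phase at the cost of half a power of $X$. Alternatively, embed the configuration in $\mathbb{R}^{d+1}$, which has $d+1\equiv 0\pmod 4$, and remove the logarithm there, e.g.\ by using the points' lying in a hyperplane to sharpen the shell count in Step 3. I would try the first route first.
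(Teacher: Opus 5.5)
Your Steps 1--2 follow the same Konyagin-type scheme as the paper's Lemma~\ref{lem:Kon}; your $h$ with $a_0>0$ is equivalent to the paper's $T\le -A$ on $I_\delta$. However, the two places where the real work lies are either left open or wrong.

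In Step 1 the phase $\varphi_d=(d-1)\pi/4$ matters modulo $2\pi$, not modulo $\pi$, because the $b_k$ must be nonnegative. For $d\equiv5\pmod8$ you have $\varphi_d=\pi$, so you need $\sum_k b_k\cos(2\pi kt)\ge a_0>0$ whenever $\intdist{t}\ge\delta$. The Fejér kernel gives the opposite inequality. Such a polynomial has mean zero and is maximal at $t=0$, yet is forced far below zero somewhere in $(-\delta,\delta)$, so its existence is not obvious. For $\varphi_d=\pi/4$ the cosine and sine coefficients are both forced to equal $b_k/\sqrt2$, so nothing can be ``mixed''. With Fejér weights the sine part is the conjugate Fejér kernel, roughly $\cot(\pi t)/\sqrt2$, which is positive and of size $\asymp\delta^{-1}$ near $t=\delta$. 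The phases $5\pi/4$ and $7\pi/4$ are not addressed at all. So you have constructed $h$ only for $d\equiv1\pmod8$.

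This existence statement is the paper's main technical input (Lemmas~\ref{lem:integeral_of_trig} and~\ref{lem:dual_lemma}). By Hahn--Banach and Riesz, if no such polynomial existed there would be a nonzero positive measure $\mu$ on $I_\delta$ with $\alpha_k=\int\cos(kx-\varphi)\,\d\mu\ge0$ for all $k\ge1$. Up to an additive constant, $\sum_k\alpha_kz^k$ is then the Cauchy transform of a measure on the arc $C_\delta=\{e^{ix}:x\in I_\delta\}$, hence analytic at $z=1\notin C_\delta$. By Vivanti--Pringsheim its radius of convergence exceeds $1$, so the Cauchy transform extends to an entire function and the measure must vanish, which is impossible when $\cos\varphi\neq0$.

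Step 3 is left as ``granting this'', and your diagnosis of it is off. The paper closes it by induction on dimension. The points within distance $u$ of $p_i$ themselves satisfy \eqref{eqn:main} in a ball of radius $u$, so by Lemma~\ref{lem:ind} there are $O(u\,f_{d-1}(u))$ of them. This is $O(u^{d/2}\log u)$, or $O(u^{(d+1)/2})$ when $d-1\equiv3\pmod4$. Partial summation then gives $E=O(n)$, respectively $O(n\log X)$. That, and not the phase $3\pi/4$, is the source of the $\log X$ for $d\equiv0\pmod4$. The case $d\equiv3\pmod4$ is just Lemma~\ref{lem:ind} applied to the bound $X^{(d-2)/2}$ in dimension $d-1\equiv2\pmod4$. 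Your ball-indicator route would not reach $X^{d/2}$: $\widehat{1_B}$ decays like $r^{-(d+1)/2}$, not $r^{-d/2}$, so at best it gives $X^{(d+1)/2}$.

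In fact your own set-up closes Step 3 without induction, if you compare the error with the main term pair by pair instead of after bounding $w\ge(2X)^{-(d-1)/2}$:
\begin{itemize}
\item For $r_{ij}\ge\delta$ the total Bessel error is at most $C r_{ij}^{-(d+1)/2}$ with $C=O_{d,\delta}(1)$.
\item This is $\le\frac{a_0}{2}w(r_{ij})$ once $r_{ij}\ge R_0:=2C/a_0$, so it is absorbed by half of $a_0\sum_{i\ne j}w(r_{ij})$.
\item By $\delta$-separation only $O_{d,\delta}(n)$ ordered pairs have $r_{ij}<R_0$, each contributing $O_{d,\delta}(1)$.
\end{itemize}
This yields $n(n-1)(2X)^{-(d-1)/2}=O_{d,\delta}(n)$ directly, with no logarithm even for $d\equiv0\pmod4$. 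Your second route for $d\equiv3\pmod4$, via $N_d\le N_{d+1}$, then also works. Higher-order Hankel terms, smoothing, or averaging in $k$ are not needed.
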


The main technical novelty in the proof of Theorem~\ref{thm:upper} is a trigonometric lemma (Lemma~\ref{lem:dual_lemma}) which states that there exist trigonometric polynomials of a certain form that are uniformly negative over the interval $[2\pi \delta, 2\pi (1-\delta)]$. This generalises a similar trigonometric lemma from Konyagin's paper~\cite[Lemma~1]{Kon}. However, the proof of Lemma~\ref{lem:dual_lemma} is new and uses several analytic tools such as Cauchy transforms, the Vivanti--Pringsheim theorem, the Hahn--Banach separation theorem, and the Riesz representation theorem for measures.

Observe that Theorem~\ref{thm:upper} implies that for each fixed $d$, the growth of $N_d(X, \delta)$ is at most polynomial of degree at most $d/2$. To complement this upper bound, we give two lower bound constructions. The first is a three-dimensional adaptation of S\'ark\"ozy's construction. The second is based on integer distance sets and snowflake embeddings. If $A \subset B_k(0, R)$ has all pairwise Euclidean distances integers and $\phi \,\colon\, (B_k(0, R),\|\cdot\|_2^{1/2}) \rightarrow \mathbb{R}^m$ is bilipschitz, then for a suitable choice of the scaling parameter $\lambda$, the lifted set
\begin{equation*}
    \set{(p,\lambda \phi(p)) \,\colon\, p\in A} \subset \mathbb{R}^{k+m}
\end{equation*}
avoids a fixed neighbourhood of the integers. This relies on the fact that the distance between two lifted points has the form $(a^2+r)^{1/2}$, where $a\in\mathbb{N}$ and $r$ is comparable to $a$. Lemma~\ref{lem:sarkozy} then places this distance in the interval $(a+\delta,a+1-\delta)$. Applying this principle to $A = \set{1,\dots, \lfloor X/2 \rfloor}$ along with a bilipschitz embedding of $(B_1(0, \lfloor X/2 \rfloor),|\cdot|^{1/2})$ into $\mathbb{R}^3$, obtained using Assouad's theorem~\cite{Ass}, gives a linear construction in dimension four.

\begin{theorem}
\label{thm:lower}
    Let $\ve \in (0, 1)$. Then there exists $\delta_0 = \delta_0(\ve) > 0$ such that for every $0 < \delta \le \delta_0$,
    \begin{equation*}
        N_3(X, \delta) = \Omega_\delta(X^{1-\ve})
        \quad \text{and} \quad
        N_4(X, \delta) = \Omega_\delta(X).
    \end{equation*}
\end{theorem}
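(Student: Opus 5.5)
For both bounds I would use the lifting principle sketched above: take points on a line, add a "snowflake" coordinate, and apply Lemma~\ref{lem:sarkozy}. I take that lemma to be of the following form (I have not seen its statement, so this is my reading of it). There are constants $0<c_1\le c_2<2$, and $\delta$ small in terms of $c_1,c_2$, such that for every $a\in\mathbb{N}$ and every $r\in[c_1a,c_2a]$,
\begin{equation*}
\intdist{\sqrt{a^2+r}}\ge\delta .
\end{equation*}
The reason is that $\sqrt{a^2+r}-a=r/(\sqrt{a^2+r}+a)$. This quantity is at most $c_2a/(2a)<1$ and at least $c_1a/(2a+c_2/2)\ge c_1/(2+c_2/2)>0$. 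So $\sqrt{a^2+r}$ lies strictly inside $(a,a+1)$, uniformly away from its endpoints.

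\textbf{Dimension four.} The metric space $(\mathbb{R},|\cdot|^{1/2})$ is doubling, so by Assouad's theorem it admits a bilipschitz embedding $\phi\colon\mathbb{R}\to\mathbb{R}^3$. By dilation invariance the constant $L$ is independent of scale, so $\phi$ can be restricted to $[1,\lfloor X/2\rfloor]$. Concretely,
\begin{equation*}
L^{-1}|s-t|\le|\phi(s)-\phi(t)|^2\le L|s-t| .
\end{equation*}
(If Assouad only gives some $\mathbb{R}^m$, one uses the known embedding of the $1/2$-snowflake of the line into $\mathbb{R}^3$; I take this input as given.) I would choose $\lambda$ with $\lambda^2L=1$, set $p_i=(i,\lambda\phi(i))\in\mathbb{R}^4$ for $1\le i\le\lfloor X/2\rfloor$, and translate so that $\phi(1)=0$. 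Then
\begin{equation*}
|p_i-p_j|^2=a^2+r,\qquad a=|i-j|,\qquad r=\lambda^2|\phi(i)-\phi(j)|^2\in[a/L^2,\,a].
\end{equation*}
Lemma~\ref{lem:sarkozy} with $c_1=L^{-2}$ and $c_2=1$ gives $\intdist{|p_i-p_j|}\ge\delta_0$. For the size of the configuration, $|\phi(i)|\le\sqrt{LX}=o(X)$, so after recentring at $(X/4,0)$ all points lie in $B_4(0,X)$. This produces $\lfloor X/2\rfloor=\Omega(X)$ points.

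\textbf{Dimension three.} Here only two extra coordinates are available. The snowflaked line has Assouad dimension $2$ and does not embed bilipschitzly into $\mathbb{R}^2$, so I would pass to a Cantor-type subset of $[1,X]$ of slightly smaller dimension. Fix a large base $b$ and a digit set $D\subset\{0,\dots,b-1\}$ of size $m\asymp b$ whose elements are pairwise at least $2$ apart and all less than $b/2$. Let $A$ be the set of integers $n=\sum_{k<K}e_kb^k$ with $e_k\in D$, where $b^K\asymp X$. Then
\begin{equation*}
|A|=m^K=X^{\log m/\log b}=X^{1-O(1/\log b)}\ge X^{1-\ve}
\end{equation*}
once $b$ is large in terms of $\ve$. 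Let $v\colon D\to\mathbb{R}^2$ be an injection onto a $1$-separated set in a disc of radius $C\sqrt m$, and define
\begin{equation*}
\phi(n)=\sum_k v(e_k)\,b^{k/2}.
\end{equation*}

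The key step, and the main obstacle, is the bilipschitz estimate $|\phi(n)-\phi(n')|^2\asymp_b|n-n'|$. Let $k$ be the top index where the digits of $n$ and $n'$ differ. The digit gaps and the bound $e<b/2$ prevent carries, so
\begin{equation*}
b^k\le|n-n'|\le b^{k+1}.
\end{equation*}
On the $\phi$ side the top term contributes at least $b^{k/2}$. The lower digits contribute at most $2C\sqrt m\sum_{j<k}b^{j/2}\le 2C\sqrt{m}\,b^{k/2}/(\sqrt b-1)$. This is at most $\tfrac12 b^{k/2}$ provided $m\le c\,b$ for a suitable absolute $c$; choosing $m$ this way keeps $m\asymp b$, so the count above is unaffected. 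This gives
\begin{equation*}
\tfrac14 b^{k}\le|\phi(n)-\phi(n')|^2\le C'm\,b^{k},
\end{equation*}
hence bilipschitz constants $L=L(b)=L(\ve)$. I would then lift as before, with $p_n=(n,\lambda\phi(n))\in\mathbb{R}^3$ and $\lambda^2L=1$. Lemma~\ref{lem:sarkozy} gives $\delta_0(\ve)$, and the points lie in $B_3(0,X)$ after recentring, since $|\phi(n)|=O_b(\sqrt X)$. This gives $N_3(X,\delta)=\Omega_\delta(X^{1-\ve})$ for all $\delta\le\delta_0$.
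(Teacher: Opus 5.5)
Your argument is correct. In dimension four it is the paper's proof: Proposition~\ref{prop:lower4} via Lemma~\ref{lem:snowflake_lift}. Normalising $\lambda^2L=1$ and reproving Lemma~\ref{lem:sarkozy} in the form $r\in[c_1a,c_2a]$ are cosmetic changes.

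In dimension three your route looks different. You apply the same lifting principle to a Cantor-type set of integers, with a digitwise $1/2$-snowflake map into $\mathbb{R}^2$. The paper instead presents Proposition~\ref{prop:lower3} as a direct extension of S\'ark\"ozy's paraboloid construction and checks it by a two-case digit analysis. In substance the two coincide. Write $b=k^2$, $n=\sum_i(\alpha_i+k\beta_i)b^i$ and $\phi(n)=\sum_i(\alpha_i,\beta_i)\,b^{i/2}$. Then the paper's set is $\{(\phi(n),8k^2n)\}$. This is your template with $v(\alpha+k\beta)=(\alpha,\beta)$, and with the integer multiplier $8k^2$ on the line coordinate playing the role of your $\lambda$, which keeps the set inside $\mathbb{Z}^3$.

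Your generic version is quicker to verify. It also makes transparent why one loses $X^{\varepsilon}$ in dimension three: the $1/2$-snowflake of a set of dimension below $1$ fits in $\mathbb{R}^2$, while that of an interval does not.

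The price is quantitative. You require the lower digits to contribute at most half of the top term, and this crude bound forces $m\le cb$, a fixed fraction of the digits. Hence $\varepsilon\approx\log(1/c)/\log b$, so you need $b\gtrsim c^{-1/\varepsilon}$. Your $\delta_0(\varepsilon)$, of order $b^{-2}$, is then exponentially small in $1/\varepsilon$, and your implied constant depends on $b(\varepsilon)$. The paper's coordinatewise base-$k$ estimates tolerate digits $\alpha,\beta\in\{0,\dots,k-2\}$, that is, density $(1-1/k)^2$ in base $k^2$. This gives the explicit bound $\delta^{6/5}X^{1-6\delta^{1/5}}$ and $\delta_0(\varepsilon)\approx(\varepsilon/6)^5$. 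For the theorem as stated, either approach suffices.
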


Theorem~\ref{thm:lower} gives an almost linear lower bound in dimension $3$. Further, since $N_d(X, \delta)$ is nondecreasing in $d$, it gives a linear lower bound in every dimension $d \ge 4$. In particular, together with Konyagin's square root upper bound in two dimensions, it shows that the asymptotic behaviour of $N_d(X, \delta)$ in higher dimensions diverges substantially from the planar case, namely, $N_d(X, \delta) / N_2(X, \delta) \rightarrow \infty$ for $d \ge 3$ and sufficiently small fixed $\delta$. The stronger superlinear lower bound in Question~\ref{ques:main} remains open.

It is also natural to view this problem in the broader context of distance phenomena. A closely related extremal question concerns the maximum size of integer distance sets, which, as mentioned above, can be used to construct sets avoiding near integer distances via suitable embeddings. There is also a measurable analogue of integer distance avoiding sets due to Erd\H{o}s, Graham, and S\'ark\"ozy (see~\cite{Erd2}). See Section~\ref{sec:conclusion} for an extended discussion about these as well as other related problems.

From a different perspective, it is also related to the works of Furstenberg, Katznelson, and Weiss~\cite{FKW} and Falconer~\cite{Fal}, which study the structure of distance sets in dense subsets of $\mathbb{R}^d$. See also the work of Bukh~\cite{Buk} where the problem of determining the maximum density of a set with a finite set of excluded distances is studied.

The rest of this paper is organised as follows. In Section~\ref{sec:upper}, we prove the upper bound in Theorem~\ref{thm:upper}. In Section~\ref{sec:lower}, we prove the lower bounds in Theorem~\ref{thm:lower}, including both the generalisation of S\'ark\"ozy's construction and the snowflake-lift construction. Finally, we conclude with a discussion about related questions and several directions for future research in Section~\ref{sec:conclusion}.

\section*{Acknowledgements}

The authors thank Peter Keevash and J\'anos Pach for helpful conversations. We also thank Thomas Bloom for maintaining the collection of Erd\H{o}s problems~\cite[see problems~465, 466, and 953]{Blo} where the authors first came across this problem. We acknowledge the use of Gemini 3.1 Pro (Google) and ChatGPT5 (OpenAI) in generating some proof ideas. RG is supported by a joint Clarendon Fund and Exeter College SKP scholarship. KM is supported by ERC Advanced Grants ``GeoScape'', no. 882971 and ``ERMiD'', no. 101054936.

\section{Upper bound}
\label{sec:upper}

We begin with two lemmas that give elementary bounds on $N_d(X, \delta)$. We then state the main ingredient for the upper bound, Lemma~\ref{lem:Kon}, a Fourier-analytic lemma inspired by Konyagin's method~\cite{Kon}. We first show how an inductive argument using this lemma implies Theorem~\ref{thm:upper}. The remainder of the section is devoted to the proof of Lemma~\ref{lem:Kon}, whose primary auxiliary input is the trigonometric lemma, Lemma~\ref{lem:dual_lemma}. In order to prove this trigonometric lemma, we first establish its dual in Lemma~\ref{lem:integeral_of_trig}.

\begin{lemma}
\label{lem:oned}
    Let $\delta \in (0, 1/2)$ and $X > 0$. Then $N_1(X, \delta) \le 1/\delta$.
\end{lemma}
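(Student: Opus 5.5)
We sort the points and use their fractional parts. Take points $p_1 < p_2 < \dots < p_n$ in $[-X, X]$ satisfying \eqref{eqn:main}. Every distance $p_j - p_i$ with $i < j$ is at least $\delta$ away from any integer. The idea is to look at each point $p_i$ only modulo $1$, that is, at its fractional part $\{p_i\} \in [0,1)$, viewed as a point of the circle $\mathbb{R}/\mathbb{Z}$. For $i \ne j$, the circular distance between $\{p_i\}$ and $\{p_j\}$ is exactly $\intdist{p_i - p_j}$. Since $\intdist{t}$ depends only on $t$ modulo $1$ and is symmetric under $t \mapsto -t$, we have $\intdist{p_i-p_j} = \intdist{\,|p_i-p_j|\,} \ge \delta$.

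So the images of the $n$ points on the circle $\mathbb{R}/\mathbb{Z}$ of length $1$ are pairwise at circular distance at least $\delta$. In particular they are distinct. We order them cyclically and consider the $n$ consecutive gaps between neighbours. Each gap is at least $\delta$, and together the gaps sum to exactly $1$. Hence $n\delta \le 1$, which gives $n \le 1/\delta$. When $n = 1$ the bound holds trivially because $\delta < 1/2$. This holds for every valid configuration, so $N_1(X,\delta) \le 1/\delta$, independently of $X$.

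There is no real obstacle here. The only point needing care is that reducing to the circle is legitimate, namely that $\intdist{\cdot}$ is $1$-periodic and even, so the condition depends only on the fractional parts. It is also worth noting that the gap-sum argument uses all $n$ cyclic gaps, including the wrap-around gap, to get $1/\delta$ rather than $1/\delta + 1$.
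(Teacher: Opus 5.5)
Your proposal is correct and follows the same route as the paper: project the points to $\mathbb{R}/\mathbb{Z}$, observe that the images are distinct and $\delta$-separated in the circular metric, and conclude $n \le 1/\delta$. Your cyclic gap-sum argument spells out the final packing step, which the paper states without detail.
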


\begin{proof}
    Fix $\delta \in (0, 1/2)$. Let $Q$ be any set of points in $\mathbb{R}$ satisfying \eqref{eqn:main}. Let $\widetilde{Q}$ be the set consisting of the projections of points in $Q$ onto the torus $\mathbb{R} / \mathbb{Z}$. Then \eqref{eqn:main} implies that no two elements of $Q$ map to the same element in $\widetilde{Q}$ under this projection and $\widetilde{Q}$ is $\delta$-separated, which together imply $|Q| = |\widetilde{Q}| \le 1/\delta$. Therefore, $N_1(X, \delta) \le 1/\delta$ for all $X > 0$.
\end{proof}

\begin{lemma}
\label{lem:ind}
    Let $d \in \mathbb{N}$, $\delta \in (0, 1/2)$, and $X > 0$. Then $N_{d+1}(X, \delta) \le \lceil 4X/\delta \rceil N_d(X, \delta/2)$.
\end{lemma}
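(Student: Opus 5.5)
Let $P \subset B_{d+1}(0,X)$ satisfy \eqref{eqn:main} with parameter $\delta$. The plan is to slice $P$ by its last coordinate, project each slice onto $\mathbb{R}^d$, and check that the projected slice is a valid configuration for $N_d(X,\delta/2)$.

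\textbf{Slicing.} Every $p \in P$ has last coordinate $p^{(d+1)} \in [-X, X]$. Partition $[-X,X]$ into $\lceil 4X/\delta \rceil$ consecutive intervals, each of length at most $2X/\lceil 4X/\delta\rceil \le \delta/2$. For each interval $I$, let $P_I$ be the set of points of $P$ whose last coordinate lies in $I$; ties at shared endpoints are assigned arbitrarily. Let $\pi$ denote the projection $\mathbb{R}^{d+1}\to\mathbb{R}^d$ forgetting the last coordinate. Then $\pi(P_I) \subset B_d(0,X)$, because projection does not increase norms. Since $P$ is the union of the $P_I$, it suffices to show $|P_I| \le N_d(X,\delta/2)$ for each $I$.

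\textbf{Comparing distances.} Fix distinct $p,q \in P_I$ and write $h = |p^{(d+1)}-q^{(d+1)}| \le \delta/2$ and $s = |\pi(p)-\pi(q)|$. Then $|p-q| = \sqrt{s^2+h^2}$, so
\[
0 \le |p-q| - s = \frac{h^2}{\sqrt{s^2+h^2}+s} \le h \le \delta/2 .
\]
The map $t\mapsto \intdist{t}$ is $1$-Lipschitz, so $\intdist{s} \ge \intdist{|p-q|} - \delta/2 \ge \delta/2$.

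\textbf{Injectivity of the projection.} We also need $\pi(p)\neq\pi(q)$, so that $\pi$ is injective on $P_I$ and $\pi(P_I)$ is a genuine set of $|P_I|$ points. If $\pi(p)=\pi(q)$, then $s=0$ and $\intdist{0}=0<\delta/2$, which contradicts the bound just proved. Hence $\pi(P_I)$ consists of $|P_I|$ distinct points of $B_d(0,X)$ whose pairwise distances satisfy \eqref{eqn:main} with $\delta/2$ in place of $\delta$. Since $\delta/2 \in (0,1/2)$, this gives $|P_I| \le N_d(X,\delta/2)$.

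Summing over the $\lceil 4X/\delta\rceil$ intervals yields $|P| \le \lceil 4X/\delta \rceil N_d(X,\delta/2)$, which is the claimed bound. There is no real obstacle here; the only point needing care is the distance comparison, namely that $|p-q|-s \le h$, together with the observation that it also forces injectivity of the projection.
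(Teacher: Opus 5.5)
Your proof is correct and follows the paper's argument: cut the ball into $\lceil 4X/\delta\rceil$ slabs of width at most $\delta/2$, project each slab onto a hyperplane, and use the bound $\bigl|\,|p-q|-|\pi(p)-\pi(q)|\,\bigr|\le\delta/2$. This bound gives both the $\delta/2$-avoidance of integers and the injectivity of the projection. You are slightly more explicit than the paper about the slab count and the inequality $0\le |p-q|-s\le h$, but the route is the same.
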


\begin{proof}
    Let $P$ be any subset of points in $B_{d+1}(0, X)$ satisfying \eqref{eqn:main}, and $\mathbf{e}_1 = (1,0, \dots, 0) \in \mathbb{R}^{d+1}$. Divide the ball into slabs of width $\delta/2$ perpendicular to the direction specified by $\mathbf{e}_1$. Consider the subset $S \subseteq P$ of points in any one of these slabs. Projecting this set onto the hyperplane $x_1 = 0$ gives us a subset $\widetilde{S}$ of $B_d(0, X)$. Let $p_1$ and $p_2$ be any two distinct points in $S$. Then their projections $\widetilde{p}_1$ and $\widetilde{p}_2$ satisfy
    \begin{equation*}
        \left|\, |\widetilde{p}_1 - \widetilde{p}_2| - |p_1 - p_2|\, \right| \le \delta/2,
    \end{equation*}
    which combined with \eqref{eqn:main} imply
    \begin{equation*}
        \intdist{\, |\widetilde{p}_1 - \widetilde{p}_2| \,} \ge \delta/2.
    \end{equation*}
    Therefore, the projection is injective on $S$ and $|S| = |\widetilde{S}| \le N_d(X, \delta/2)$. Since the total number of slabs is bounded above by $\lceil 4X/\delta \rceil$, this yields $|P| \le \lceil 4X/\delta \rceil N_d(X, \delta/2)$.
\end{proof}

Starting with the bound in Lemma~\ref{lem:oned} and repeatedly applying Lemma~\ref{lem:ind} yields the bound $N_d(X, \delta) = O_{d,\delta}(X^{d-1})$ for all $d \in \mathbb{N}$ and $\delta \in (0, 1/2)$. The following lemma will allow us to improve upon this bound.

\begin{lemma}
\label{lem:Kon}
    Let $d \in \mathbb{N}$ satisfy $d \not\equiv 2 \pmod 4$. Suppose $N_d(X, \delta) = O_{d,\delta}(f_d(X))$ for all $\delta \in (0, 1/2)$, for some locally integrable function $f_d \,\colon\, (1,\infty) \rightarrow \mathbb{R}_{> 0}$. Then
    \begin{equation*}
        N_{d+1}(X, \delta) = O_{d,\delta}\left(X^{d/2}\left(1 + \int_{2}^{4X} \frac{f_d(t)}{t^{d/2+1}} \d t\right) + f_d(4X)\right) \quad \text{ for all } \delta \in (0, 1/2).
    \end{equation*}
\end{lemma}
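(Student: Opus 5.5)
The plan is to adapt Konyagin's positive-definiteness argument to dimension $n = d+1$, with the Fourier transform of the normalised surface measure on a sphere as the kernel. Let $P = \{p_1,\dots,p_N\} \subset B_{n}(0,X)$ satisfy \eqref{eqn:main}, and let $\sigma$ be the normalised surface measure on the unit sphere $S^{d}\subset\mathbb{R}^{n}$. For $k\in\mathbb{N}$ the function $x\mapsto \widehat{\sigma}(kx)$ is positive definite, so for nonnegative coefficients $a_k$,
\begin{equation*}
    0 \le \sum_{i,j} \sum_{k} a_k\, \widehat{\sigma}\bigl(k(p_i-p_j)\bigr)
    = N\sum_k a_k + \sum_{i\ne j}\sum_k a_k\, \widehat{\sigma}\bigl(k(p_i-p_j)\bigr).
\end{equation*}
Since $\widehat{\sigma}$ is a Bessel function $J_{(n-2)/2}$ with a power weight, its classical asymptotics give, for $r=|x|\ge 1$,
\begin{equation*}
    \widehat{\sigma}(kx) = c_d\,(kr)^{-d/2}\cos\bigl(2\pi kr - \tfrac{\pi d}{4}\bigr) + O\bigl((kr)^{-d/2-1}\bigr).
\end{equation*}
The phase $\pi d/4$ explains the hypothesis. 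When $d\equiv 2 \pmod 4$ the phase is $\pi/2 \pmod{\pi}$, so the main term is $\pm\sin$, which is odd about $t=1/2$. No polynomial with $a_k\ge 0$ can then be uniformly negative on $[\delta,1-\delta]$ while keeping the diagonal term favourable.

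For $d\not\equiv 2\pmod 4$, I would invoke the trigonometric lemma (Lemma~\ref{lem:dual_lemma}), which I expect to be exactly tailored to this. It should provide finitely many $a_k\ge 0$, depending on $d,\delta$, with
\begin{equation*}
    \sum_k a_k k^{-d/2}\cos\bigl(2\pi k t - \tfrac{\pi d}{4}\bigr) \le -1 \quad\text{for all } t\in[\delta,1-\delta].
\end{equation*}
Since every $r_{ij}=|p_i-p_j|$ has fractional part in $[\delta,1-\delta]$, each off-diagonal main term is at most $-c\, r_{ij}^{-d/2}\le -c(2X)^{-d/2}$. Because the $a_k$ are finitely many and fixed, the remainders are $O_{d,\delta}(r_{ij}^{-d/2-1})$, and all pairs with $r_{ij}<r_0$ can be absorbed the same way, using that $|\widehat\sigma|\le 1$. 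This yields
\begin{equation*}
    N^2 X^{-d/2} \lesssim_{d,\delta} N + \sum_{i\ne j} r_{ij}^{-d/2-1}.
\end{equation*}

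The error sum is where $f_d$ enters, and I expect it to be the main obstacle. Fix $i$ and let $M_i(t)=\#\{j\ne i: r_{ij}\le t\}$. The points counted lie in $B_n(p_i,t)$ and still satisfy \eqref{eqn:main}, so the slab argument of Lemma~\ref{lem:ind} gives
\begin{equation*}
    M_i(t)\le \lceil 4t/\delta\rceil N_d(t,\delta/2) \lesssim_{d,\delta} t f_d(t).
\end{equation*}
Using $\delta$-separation for $t\le 2$, partial summation gives
\begin{equation*}
    \sum_j r_{ij}^{-d/2-1} = \int t^{-d/2-1}\,\d M_i(t) \lesssim 1 + M_i(2X)X^{-d/2-1} + \int_2^{2X} M_i(t)\,t^{-d/2-2}\,\d t.
\end{equation*}
This is
\begin{equation*}
    \lesssim 1 + X^{-d/2} f_d(2X) + \int_2^{4X}\frac{f_d(t)}{t^{d/2+1}}\,\d t .
\end{equation*}
Some care is needed because $f_d$ is only assumed locally integrable rather than monotone. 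I would handle this by comparing $t f_d(t)$ with the count only up to constant factors, using the ball inclusions $B(p_i,t)\subset B(0,4t)$ after recentering, which is presumably why $4X$ appears. Summing over $i$ and dividing by $N X^{-d/2}$ gives
\begin{equation*}
    N \lesssim X^{d/2}\Bigl(1+\int_2^{4X} f_d(t)\,t^{-d/2-1}\,\d t\Bigr) + f_d(4X),
\end{equation*}
which is the claim. The delicate points are the uniform control of the Bessel remainder together with the short-range pairs, and matching the boundary term of the partial summation to the stated $f_d(4X)$.
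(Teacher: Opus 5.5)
Your proposal is correct and follows essentially the same route as the paper: positive definiteness of $\widehat{\sigma}(k\,\cdot)$ for the sphere $S^d$, the Bessel asymptotic with phase $d\pi/4$, Lemma~\ref{lem:dual_lemma} with $\ell = d$, and partial summation against $M_i(t) \le \lceil 4t/\delta\rceil N_d(t,\delta/2) \lesssim t f_d(t)$. The $f_d(2X)$ versus $f_d(4X)$ mismatch you flag is resolved in the paper by running the partial summation up to $4X$ instead of $2X$; the sum is unchanged because all $r_{ij} \le 2X$. The boundary term then becomes $(4X)^{-d/2-1}M_i(4X) \lesssim X^{-d/2} f_d(4X)$ directly, with no monotonicity of $f_d$ needed.
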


We now prove Theorem~\ref{thm:upper} assuming the above lemma.

\begin{proof}[Proof of Theorem~\ref{thm:upper}]
    The proof proceeds via an induction on the dimension $d$. For the base case $d = 1$, Lemma~\ref{lem:oned} implies $N_1(X, \delta) = O_\delta(1)$. This verifies the result for $d = 1$.

    For the inductive step, suppose that the result is true for dimension $d$. We will show that the result is also true for dimension $d+1$. We divide our analysis into four cases.

    \Case{1} $d \equiv 1 \pmod 4$. In this case, we shall use Lemma~\ref{lem:Kon}. By the inductive hypothesis, we can take $f(t) = t^{(d-1)/2}$, which implies
    \begin{equation*}
        \int_{2}^{4X} \frac{f(t)}{t^{d/2+1}} \d t = \int_{2}^{4X} t^{-3/2} \d t = O_\delta(1).
    \end{equation*}
    Using this in Lemma~\ref{lem:Kon} yields $N_{d+1}(X, \delta) = O_{d,\delta}(X^{d/2})$ for all $\delta \in (0, 1/2)$.

    \Case{2} $d \equiv 2 \pmod 4$. In this case, we shall use Lemma~\ref{lem:ind}. By the inductive hypothesis, we have $N_d(X, \delta) = O_{d,\delta}(X^{(d-1)/2})$ for all $\delta \in (0, 1/2)$. Using this in Lemma~\ref{lem:ind} yields $N_{d+1}(X, \delta) = O_{d,\delta}(X^{(d+1)/2})$ for all $\delta \in (0, 1/2)$.

    \Case{3} $d \equiv 3 \pmod 4$. In this case, we shall use Lemma~\ref{lem:Kon}. By the inductive hypothesis, we can take $f(t) = t^{d/2}$, which implies
    \begin{equation*}
        \int_{2}^{4X} \frac{f(t)}{t^{d/2+1}} \d t = \int_{2}^{4X} t^{-1} \d t = O_\delta(\log X).
    \end{equation*}
    Using this in Lemma~\ref{lem:Kon} yields $N_{d+1}(X, \delta) = O_{d,\delta}(X^{d/2} \log X)$ for all $\delta \in (0, 1/2)$.

    \Case{4} $d \equiv 0 \pmod 4$. In this case, we shall use Lemma~\ref{lem:Kon}. By the inductive hypothesis, we can take $f(t) = t^{(d-1)/2} \log t$, which implies
    \begin{equation*}
        \int_{2}^{4X} \frac{f(t)}{t^{d/2+1}} \d t = \int_{2}^{4X} \frac{\log t}{t^{3/2}} \d t = O_\delta(1).
    \end{equation*}
    Using this in Lemma~\ref{lem:Kon} yields $N_{d+1}(X, \delta) = O_{d,\delta}(X^{d/2})$ for all $\delta \in (0, 1/2)$.

    The four cases above together finish the proof of the inductive step, and therefore the theorem follows by induction.
\end{proof}

The remainder of this section is devoted to proving Lemma~\ref{lem:Kon}. For $\delta \in (0, 1/2)$, we define
\begin{equation*}
    I_\delta \coloneqq [2\pi\delta, 2\pi(1-\delta)].
\end{equation*}
Suppose that the distance between two points $p$ and $q$ in $\mathbb{R}^d$ satisfies $\intdist{|p - q|} \ge \delta$. Then $2 \pi |p-q|$ modulo $2 \pi$ lies in the interval $I_\delta$. Our goal in the following two lemmas is to establish the existence of trigonometric polynomials of a certain form that are uniformly negative on the interval $I_\delta$, and hence also on the set $\set{2 \pi r \,\colon\, r \in \mathbb{R}_{\ge 0}, \intdist{r} \ge \delta}$.

\begin{lemma}
\label{lem:integeral_of_trig}
    Let $\delta \in (0, \frac{1}{2})$, and let $\ell \in \mathbb{N}$. Then a nonzero finite positive Borel measure $\mu$ supported on the interval $I_\delta$ satisfying
    \begin{equation}
    \label{eq:negative_integral_of_trig}
        \int_{I_\delta} \cos\left(kx-\frac{\ell\pi}{4}\right) \d \mu \ge 0
        \quad \text{ for all } k \in \mathbb{N},
    \end{equation}
    exists if and only if $\ell \equiv 2 \pmod{4}$.
\end{lemma}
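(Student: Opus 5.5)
For $\ell \equiv 2 \pmod 4$ I will give an explicit measure. For the other residues I will use a Cauchy-transform argument together with the Vivanti--Pringsheim theorem and Liouville's theorem.

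\emph{Existence when $\ell \equiv 2 \pmod 4$.} Write $\ell = 2 + 4m$. Then $\ell\pi/4 = \pi/2 + m\pi$, so $\cos(kx - \ell\pi/4) = (-1)^m \sin(kx)$. Take $\mu = \delta_\pi$, the unit point mass at $\pi$, which lies in $I_\delta$ since $\delta<1/2$. Then every integral in \eqref{eq:negative_integral_of_trig} equals $(-1)^m\sin(k\pi) = 0 \ge 0$, as required.

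\emph{Nonexistence when $\ell \not\equiv 2 \pmod 4$.} Set $\theta = \ell\pi/4$, so $\cos\theta \neq 0$, and suppose such a $\mu$ exists. Put $a_k = \int e^{ikx}\,\d\mu$, so the hypothesis reads $b_k \coloneqq \mathrm{Re}(e^{-i\theta} a_k) \ge 0$ for all $k \ge 1$. Define the Cauchy-type transform
\begin{equation*}
    G(z) = e^{-i\theta}\int_{I_\delta} \frac{z e^{ix}}{1 - z e^{ix}} \,\d\mu(x).
\end{equation*}
$G$ is holomorphic on $\mathbb{C}$ minus the closed arc $A = \set{e^{-ix} \colon x \in I_\delta}$. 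For $|z|<1$ it equals $\sum_{k\ge1} e^{-i\theta} a_k z^k$, and $G(z) \to -e^{-i\theta}\mu(I_\delta)$ as $z \to \infty$. Now set $H(z) = \tfrac12\big(G(z) + \overline{G(\bar z)}\big)$. This function is holomorphic off $A \cup \bar A = A$; here $A=\bar A$ because $I_\delta$ is symmetric under $x \mapsto 2\pi - x$. Its Taylor series at $0$ is $\sum_{k \ge 1} b_k z^k$, which has nonnegative coefficients.

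The key step is to show that this series has radius of convergence $R > 1$. Since $H$ is holomorphic on the open unit disc, $R \ge 1$. If $R = 1$ exactly, then Vivanti--Pringsheim forces $z = R = 1$ to be a singular point of the function defined by the series. But the series agrees with $H$ on the unit disc, and $H$ is holomorphic in a neighbourhood of $1$, because $1 = e^{-i\cdot 0} \notin A$ as $0 \notin I_\delta$. This is a contradiction, so $R > 1$.

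Hence $H$ extends holomorphically to $\set{|z| < R}$, and it is also holomorphic on $\mathbb{C} \setminus A$ with $A$ contained in the unit circle. These two regions cover $\mathbb{C}$, and the two definitions agree on their overlap by the identity theorem, so $H$ is entire. It is also bounded near infinity, since
\begin{equation*}
    H(z) \to -\tfrac12\big(e^{-i\theta} + e^{i\theta}\big)\mu(I_\delta) = -\cos\theta\,\mu(I_\delta).
\end{equation*}
By Liouville's theorem $H$ is constant, so it equals $H(0) = 0$. Comparing with the limit at infinity gives $\cos\theta\,\mu(I_\delta) = 0$. Since $\mu \ne 0$, this forces $\cos\theta = 0$, contradicting $\ell \not\equiv 2 \pmod 4$.

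The main obstacle is the radius-of-convergence step. One must check carefully that the conjugate-symmetrised function $H$ really has real nonnegative Taylor coefficients (only $\mathrm{Re}(e^{-i\theta}a_k)$ is controlled, not $a_k$ itself). One must also check that the gap of $A \cup \bar A$ around $z=1$ is genuinely open, which is where $\delta > 0$ enters. The remaining steps are routine.
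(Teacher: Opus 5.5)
Your proof is correct. It shares the paper's skeleton: the power series with nonnegative coefficients $\int\cos(kx-\ell\pi/4)\,\d\mu$, its representation through a Cauchy-type integral over the arc, and the use of Vivanti--Pringsheim at $z=1$ to get $R>1$ and hence an entire extension. Your symmetrised function $H(z)=\frac12\bigl(G(z)+\overline{G(\bar z)}\bigr)$ is exactly the paper's $F$. The paper achieves the same symmetrisation by introducing the reflected measure $\widetilde{\mu}(E)=\mu(2\pi-E)$ and writing $F=-\cos(\ell\pi/4)+G$, where $G$ is the Cauchy transform of a complex measure $\nu$ on the arc, and it normalises $\mu$ to a probability measure.

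The two routes genuinely diverge once the entire extension is in hand. The paper invokes the distributional identity $\overline{\partial}\,G=-\pi\nu$ for Cauchy transforms, together with uniqueness in the Riesz representation theorem, to conclude $\nu=0$. It then takes real parts to reach $\frac12\cos(\ell\pi/4)(\mu+\widetilde{\mu})=0$. You instead note that the entire function has the finite limit $-\cos\theta\,\mu(I_\delta)$ at infinity. Liouville then makes it constant, equal to its value $0$ at the origin.

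Your finish is strictly more elementary. It uses only holomorphy of the Cauchy integral off the arc and its values at $0$ and $\infty$, and avoids distribution theory altogether. The paper's route proves the stronger but unneeded fact that $\nu$ vanishes identically. In the paper's own notation, your argument says the Cauchy transform $G$ is entire and tends to $0$ at infinity, hence $G\equiv 0$, contradicting $G(0)=F(0)+\cos(\ell\pi/4)\neq 0$.

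For the existence direction, your point mass at $\pi$ works just as well as the paper's Lebesgue measure on $I_\delta$, since every relevant integral vanishes in both cases.
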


\begin{proof}
    First, suppose $\ell \equiv 2 \pmod{4}$. Then $\cos\left(kx-\frac{\ell\pi}{4}\right)$ is odd about $x = \pi$ for each $k \in \mathbb{N}$, which implies 
    \begin{equation*}
        \int_{2\pi\delta}^{2\pi(1-\delta)} \cos\left(kx-\frac{\ell\pi}{4}\right) \d x = 0
        \quad \text{ for all } k \in \mathbb{N}.
    \end{equation*}
    Therefore, \eqref{eq:negative_integral_of_trig} is satisfied by taking $\mu$ to be the Lebesgue measure restricted to $I_\delta$.
    
    Conversely, suppose $\ell \not \equiv 2 \pmod{4}$. Suppose for contradiction that there exists a nonzero finite positive Borel measure $\mu$ satisfying \eqref{eq:negative_integral_of_trig}. We may assume without loss of generality that $\mu$ is a probability measure since it still satisfies \eqref{eq:negative_integral_of_trig} after rescaling by a positive constant. Let $A = A(\ell) \coloneqq \cos(\ell\pi/4)$ and $B = B(\ell) \coloneqq \sin(\ell\pi/4)$. Note that $A \ne 0$ and
    \begin{equation*}
        \cos\left(\theta - \frac{\ell\pi}{4}\right) = A \cos \theta + B \sin \theta
    \end{equation*}
    for all $\theta \in \mathbb{R}$. For $k \in \mathbb{N}$, let $\alpha_k = \int_{I_\delta} (A \cos(kx) + B \sin(kx)) \d \mu$.  By hypothesis, $\alpha_k \ge 0$. Further, since $\mu$ is a probability measure, we have $|\alpha_k| \le |A| + |B|$. Consider the power series
    \begin{equation*}
        F(z) \coloneqq \sum_{k=1}^\infty \alpha_k z^k.
    \end{equation*}
    Since $|\alpha_k|$ is uniformly bounded above, the radius of convergence is $R \ge 1$. For any $z$ in the open unit disc, the power series converges absolutely. By the dominated convergence theorem, we get
    \begin{equation}
    \label{eqn:Fdef}
        F(z) = \int \sum_{k=1}^\infty z^k (A \cos(kx) + B \sin(kx)) \d \mu.
    \end{equation}
    Now, we have the standard geometric series evaluations
    \begin{equation*}
        \sum_{k=1}^\infty z^k \cos(kx) = \frac{z \cos x - z^2}{1 - 2z \cos x + z^2}, \text{ and } \sum_{k=1}^\infty z^k \sin(kx) = \frac{z \sin x}{1 - 2z \cos x + z^2},
    \end{equation*}
    which can be obtained by looking at the real and imaginary parts of the infinite geometric series $\sum_{k=1}^\infty (z e^{ix})^k$. Plugging these into \eqref{eqn:Fdef}, we obtain
    \begin{equation*}
        F(z) = \int K(z, x) \d \mu,
    \end{equation*}
    where
    \begin{equation*}
        K(z, x) = \frac{A(z \cos x - z^2) + B z \sin x}{1 - 2z \cos x + z^2}.
    \end{equation*}
    Writing the expression for $K(z, x)$ in a partial fractions form, we obtain
    \begin{equation*}
        K(z, x) = -A - \frac{A+iB}{2} \frac{e^{ix}}{z - e^{ix}} - \frac{A-iB}{2} \frac{e^{-ix}}{z - e^{-ix}}. 
    \end{equation*}
    Let us define another measure $\widetilde{\mu}$ supported on $I_\delta$ by $\widetilde{\mu}(E) = \mu(2\pi - E)$. Then we have
    \begin{equation}
    \label{eqn:part-frac}
        F(z) = -A - \int \frac{e^{ix}}{z - e^{ix}} \left(\frac{A + iB}{2} \d \mu + \frac{A - iB}{2} \d \widetilde{\mu}\right).
    \end{equation}
    Let $w = e^{ix}$. Define a complex measure $\nu$ on the closed arc $C_\delta \coloneqq \set{e^{ix} \,\colon\, x \in I_\delta}$ as
    \begin{equation}
    \label{eqn:nudef}
        \d \nu(w) = \left(\frac{A + iB}{2} \d \mu(x) + \frac{A - iB}{2} \d \widetilde{\mu}(x)\right) w.
    \end{equation}
    Then we can rewrite \eqref{eqn:part-frac} as
    \begin{equation}
    \label{eqn:FG}
        F(z) = - A + G(z),
    \end{equation}
    where
    \begin{equation*}
        G(z) = \int_{\mathbb{T}} \frac{1}{w-z} \d \nu(w).
    \end{equation*}
    Note that the integral on the right-hand side above is the Cauchy transform of the finite complex measure $\nu$ (see \cite[Section~1.5]{Tol} for a definition). It follows from \cite[Proposition~1.13]{Tol} that $G$ is locally integrable with respect to the Lebesgue measure on $\mathbb{C}$ and is analytic on $\mathbb{C} \setminus \text{Supp}(\nu)$, where $\text{Supp}(\nu)$ denotes the support of measure $\nu$. In particular, this implies that $G$ is analytic at $z = 1$, which further implies that $F$ extends analytically to $z = 1$. Recall that the radius of convergence $R$ of the power series defining $F$ satisfies $R \ge 1$. Suppose $R = 1$. Then, by the Vivanti--Pringsheim theorem (see \cite[Theorem~IV.6]{FS}), $z = 1$ must be a singular point of $F$. But we just proved that $F$ extends analytically to $z = 1$. Therefore, $R > 1$.

    Now note that $G(z)$ is an analytic function on the domain $\mathbb{C} \setminus C_\delta$ while $F(z) + A$ is an analytic function on the domain $\Delta(0, R)$, where $\Delta(0, r)$ denotes the open disc of radius $r$ centered at the origin. Note that 
    \begin{equation*}
        (\mathbb{C} \setminus C_\delta) \cup \Delta(0, R) = \mathbb{C}
    \end{equation*}
    since $R > 1$. Moreover, by \eqref{eqn:FG}, we have $F(z) + A = G(z)$ for all $z \in \Delta(0, 1)$. Now since $\Delta(0, 1)$ is a domain contained in the intersection of $(\mathbb{C} \setminus C_\delta)$ and $\Delta(0, R)$, it follows from the identity theorem that there exists an analytic continuation $\widetilde{G}$ of $G$ that is entire. Further, since $C_\delta$ is a set of Lebesgue measure zero in $\mathbb{C}$, $G$ and $\widetilde{G}$ induce the same distribution, say $T$, on $\mathbb{C}$. Therefore, it follows from \cite[Theorem~1.14]{Tol} that
    \begin{equation*}
        \overline{\partial} (T) = -\pi T_\nu,
    \end{equation*}
    where $\overline{\partial}$ is the Cauchy--Riemann operator and $T_\nu$ is the complex distribution induced by $\nu$. But $\overline{\partial}(T) = 0$ since $\widetilde{G}$ is entire. This implies $T_\nu = 0$. Now since $C_c^\infty(\mathbb{C})$ is uniformly dense in $C_c(\mathbb{C})$ and every function in $C(C_\delta)$ can be extended to a function in $C_c(\mathbb{C})$, the condition $T_\nu = 0$ implies $\int_{C_\delta} f \d \nu = 0$ for all $f \in C(C_\delta)$. By the uniqueness part of the Riesz representation theorem for complex Borel measures (see \cite[Theorem~7.17]{Fol}), it follows that $\nu = 0$. Using this in \eqref{eqn:nudef}, and noting that $0 \not\in C_\delta$, we obtain the equality of complex measures
    \begin{equation*}
        \frac{A}{2} (\d \mu + \d \widetilde{\mu}) + \frac{iB}{2} (\d \mu - \d \widetilde{\mu}) = 0.
    \end{equation*}
    Taking real part on both sides gives
    \begin{equation*}
        \frac{A}{2} (\d \mu + \d \widetilde{\mu}) = 0.
    \end{equation*}
    Evaluating this measure on $I_\delta$, we obtain
    \begin{equation*}
        \frac{A}{2}(\mu(I_\delta) + \widetilde{\mu}(I_\delta)) = 0.
    \end{equation*}
    Finally, since $\mu(I_\delta) = \widetilde{\mu}(I_\delta) = 1$ and $A \ne 0$, this is a contradiction.
\end{proof}

Next we prove the dual form of Lemma~\ref{lem:integeral_of_trig}.
\begin{lemma}
    \label{lem:dual_lemma}
    For any $\delta \in (0, \frac{1}{2})$ and $\ell \in \mathbb{N}$ with $\ell \not \equiv 2 \pmod{4}$, there exists a trigonometric polynomial 
    \begin{equation}
        T(x)=\sum_{k=1}^mc_k \cos\brax{kx - \frac{\ell\pi}{4}}
    \end{equation}
    with nonnegative coefficients $(c_k)_{k \in [m]}$ which satisfies 
    \begin{equation}
        A \coloneqq -\max_{x\in I_\delta} T(x) > 0.
    \end{equation}
\end{lemma}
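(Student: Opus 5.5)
The plan is to derive Lemma~\ref{lem:dual_lemma} from Lemma~\ref{lem:integeral_of_trig} by a Hahn--Banach separation argument in the Banach space $C(I_\delta)$ of real continuous functions with the sup norm. Write $\phi_k(x) \coloneqq \cos(kx - \ell\pi/4)$, viewed as elements of $C(I_\delta)$. Let
\begin{equation*}
    \mathcal{K} \coloneqq \set{\sum_{k=1}^m c_k \phi_k \,\colon\, m \in \mathbb{N},\ c_k \ge 0}
\end{equation*}
be the convex cone generated by the $\phi_k$. Also let $\mathcal{N} \coloneqq \set{f \in C(I_\delta) \,\colon\, \max_{I_\delta} f < 0}$, the open convex cone of uniformly negative functions. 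The goal is to show $\mathcal{K} \cap \mathcal{N} \neq \emptyset$. Any $T$ in the intersection has the required form, and since $I_\delta$ is compact, $-\max_{I_\delta} T > 0$.

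Suppose instead that $\mathcal{K} \cap \mathcal{N} = \emptyset$. Since $\mathcal{N}$ is open, convex and nonempty (it contains the constant $-1$), and $\mathcal{K}$ is convex, the geometric Hahn--Banach theorem gives a nonzero continuous linear functional $\Lambda$ on $C(I_\delta)$ and a real $\gamma$ such that
\begin{equation*}
    \Lambda(f) < \gamma \le \Lambda(g) \quad \text{for all } f \in \mathcal{N},\ g \in \mathcal{K}.
\end{equation*}
Both sets are cones, and we use this to normalise the inequalities.

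\emph{$\mathcal{K}$ side.} Since $0 \in \mathcal{K}$, we get $\gamma \le 0$. Scaling $g$ by $t \to \infty$ forces $\Lambda(g) \ge 0$ on $\mathcal{K}$, since a negative value would scale below $\gamma$. In particular $\Lambda(\phi_k) \ge 0$ for all $k \in \mathbb{N}$.

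\emph{$\mathcal{N}$ side.} Scaling $f$ by $t \to 0^+$ gives $\Lambda(f) \le 0$ on $\mathcal{N}$. Every $h \ge 0$ satisfies $-h - \varepsilon \in \mathcal{N}$ for each $\varepsilon > 0$, so $\Lambda(h) \ge -\varepsilon\Lambda(1)$. Letting $\varepsilon \to 0$ gives $\Lambda(h) \ge 0$, so $\Lambda$ is a positive functional.

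By the Riesz representation theorem, $\Lambda(f) = \int_{I_\delta} f \,\d\mu$ for a finite positive Borel measure $\mu$ on $I_\delta$. This $\mu$ is nonzero because $\Lambda \ne 0$. Then $\Lambda(\phi_k) \ge 0$ is exactly \eqref{eq:negative_integral_of_trig}, and Lemma~\ref{lem:integeral_of_trig} forces $\ell \equiv 2 \pmod 4$, a contradiction.

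The only delicate point is the sign bookkeeping in the separation step. One has to make sure the functional is positive and nonnegative on each $\phi_k$, rather than merely bounded by some constant $\gamma$. Using the cone structure of both sets, as above, handles this.
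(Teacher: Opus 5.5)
Your proposal is correct and is essentially the paper's proof. Both proofs separate the open cone of strictly negative functions from the cone generated by the $\cos(kx-\ell\pi/4)$ in $C(I_\delta)$ via Hahn--Banach, use the cone structure to normalise the separating constant, show $\Lambda$ is positive, represent it by a nonzero measure via Riesz, and contradict Lemma~\ref{lem:integeral_of_trig}.

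One small attribution slip: letting $t\to 0^+$ on the $\mathcal{N}$ side actually gives $\gamma\ge 0$, as in the paper. The bound $\Lambda(f)\le 0$ on $\mathcal{N}$ follows directly from $\Lambda(f)<\gamma\le 0$, so nothing downstream is affected.
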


\begin{proof}
    Consider the Banach space $C(I_\delta)$. Let $\mathcal{U}$ denote the cone of strictly negative functions in $C(I_\delta)$, namely,
    \begin{equation*}
        \mathcal{U} \coloneqq \set{f\in C(I_\delta) \,\colon\, f(x) < 0 \text{ for all } x \in  I_\delta}.
    \end{equation*}
    Further, let $\mathcal{C}$ denote the cone of functions generated by finite combinations of $\cos(kx - \frac{\ell\pi}{4})$ terms with nonnegative coefficients, namely,
    \begin{equation*}
        \mathcal{C} \coloneqq \set{ T \in C(I_\delta) \,\colon\, T(x) = \sum_{k=1}^{m} c_k \cos\!\left(kx-\frac{\ell\pi}{4}\right) \text{ for some } m \in \mathbb{N} \text{ and } c_1, \dots, c_m \ge 0 }.
    \end{equation*}
    Note that both $\mathcal{C}$ and $\mathcal{U}$ are nonempty and convex. Moreover, $\mathcal{U}$ is open. Suppose that the statement of the lemma is false. Then $\mathcal{C} \cap \mathcal{U} = \emptyset$. Therefore, the Hahn--Banach separation theorem (see \cite[Theorem~3.4]{Rud}) implies that there exists a nonzero continuous linear functional $\Lambda$ that separates them; that is, there is a real number $\alpha$ and a linear functional $\Lambda \in C(I_\delta)^*$ such that $\Lambda(f) \ge \alpha$ for all $f \in \mathcal{C}$ and $\Lambda(g) < \alpha$ for all $g \in \mathcal{U}$. Since $0 \in \mathcal{C}$, we have $\alpha \le 0$. On the other hand, if $g \in \mathcal{U}$, then $tg \in \mathcal{U}$ for every $t > 0$, and hence $t \Lambda(g) < \alpha$ for every $t > 0$. Letting $t \rightarrow 0^+$ gives $\alpha \ge 0$. Thus $\alpha = 0$.
    
    We now claim that $\Lambda$ is a positive functional. Indeed, if $f \in C(I_\delta)$ and $f \ge 0$, then $- f - \ve \in \mathcal{U}$ for every $\ve > 0$. Hence, we have
    \begin{equation*}
        - \Lambda(f) - \ve \Lambda(1) < 0.
    \end{equation*}
    Letting $\ve \rightarrow 0^+$ gives $\Lambda(f) \ge 0$. Therefore, by the Riesz--Markov--Kakutani representation theorem (see \cite[Theorem~2.14]{Rud2}), there exists a unique finite positive Borel measure $\mu$ supported on $I_\delta$ such that $\Lambda(f) = \int_{I_\delta} f(x) \d \mu(x)$ for $f \in C(I_\delta)$. Note that $\mu$ is not the zero measure since $\Lambda$ is a nonzero functional. Further, since $\Lambda(f) \ge 0$ for all $f \in \mathcal{C}$, we conclude that
    \begin{equation*}
        \int_{I_\delta} \cos \left(kx-\frac{\ell\pi}{4}\right) \d \mu(x) \ge 0
    \end{equation*}
    for all $k \in \mathbb{N}$. However, the existence of $\mu$ contradicts Lemma~\ref{lem:integeral_of_trig}.
\end{proof}

With this technical lemma proven, we are ready to take the final steps in the proof.

\begin{proof}[Proof of Lemma~\ref{lem:Kon}] 
    Let $\set{p_1, \dots, p_n} \subset \mathbb{R}^{d+1}$ be a set of $n$ points in the ball of radius $X$, such that the distance between each pair of distinct points is at least $\delta$ away from every integer. Let $\d \sigma$ denote the normalised surface measure on $S^d$. The argument begins with the following formula for the Fourier transform of this measure:
    \begin{equation}
    \label{eq:spherical-integral}
        \int_{S^d} e^{2\pi i k \langle y, \omega \rangle} \, \d \sigma(\omega) = C_d\, (k|y|)^{-\nu} J_{\nu}(2\pi k|y|),
    \end{equation}
    where $\nu = \frac{d-1}{2}$, $C_d > 0$ is a constant depending only on $d$, and $J_\nu$ denotes the Bessel function of the first kind.

    For each integer $k \ge 1$ and direction $\omega \in S^d$, define
    \begin{equation*}
        A_k(\omega) = \sum_{j=1}^n e^{2\pi i k \langle p_j, \omega \rangle}.
    \end{equation*}
    Then we have
    \begin{equation*}
        |A_k(\omega)|^2 = \sum_{i,j} e^{2\pi i k \langle p_i - p_j, \omega \rangle}.
    \end{equation*}
    Integrating over $\omega$ on the sphere $S^{d}$ and using \eqref{eq:spherical-integral} yields
    \begin{equation}
    \label{eq:integral-sum}
        \int_{S^d} |A_k(\omega)|^2 \, \d \sigma(\omega) = n + C_d k^{-\nu} \sum_{i \ne j} r_{ij}^{-\nu} J_\nu(2\pi k r_{ij}),
    \end{equation}
    where $r_{ij} = |p_i - p_j|$. Let $m \in \mathbb{N}$ and $(b_k)_{k \in [m]}$ be a sequence of nonnegative weights, to be chosen later. Summing \eqref{eq:integral-sum} over $k \in [m]$ with weights $b_k$, we obtain
    \begin{equation}
    \label{eq:sum-sphere}
        \sum_{k=1}^m b_k \int_{S^d} |A_k(\omega)|^2 \, \d \sigma(\omega) = n \sum_{k=1}^m b_k + C_d \sum_{i \ne j} \sum_{k=1}^m b_k k^{-\nu} r_{ij}^{-\nu} J_\nu(2\pi k r_{ij}).
    \end{equation}
    The left-hand side of \eqref{eq:sum-sphere} is nonnegative, so the right-hand side is nonnegative as well. The Bessel function satisfies the asymptotic formula
    \begin{equation*}
        J_\nu(u) = \sqrt{\frac{2}{\pi u}} \left[\cos\left(u - \frac{(2\nu+1)\pi}{4}\right) + O(u^{-1})\right].
    \end{equation*}
    Note that $2\nu + 1 = d \not \equiv 2 \pmod{4}$. Therefore, by Lemma~\ref{lem:dual_lemma}, there exists a trigonometric polynomial
    \begin{equation*}
        T(x) \coloneqq \sum_{k = 1}^m c_k \cos\left(kx - \frac{(2\nu+1)\pi}{4}\right)
    \end{equation*}
    such that $\max_{x \in I_\delta} T(x) = -A$ for some $A > 0$. Since $T$ is $(2 \pi)$-periodic and $2 \pi r_{ij}$ modulo $2 \pi$ lies in $I_\delta$, we have
    \begin{equation*}
        T(2\pi r_{ij})\le -A \quad \text{ for all } i \ne j.
    \end{equation*}
    Setting $b_k = \frac{\pi k^{\nu+1/2}}{C_d} c_k$ for $k \in [m]$ in \eqref{eq:sum-sphere} and using the Bessel function asymptotic, we obtain
    \begin{equation}
    \label{eq:nearlybounded}
        \begin{split}
            0 &\le n \sum_{k=1}^m b_k + \sum_{i \ne j} r_{ij}^{-\nu-\frac{1}{2}} \sum_{k=1}^m c_k \cos\left(2\pi k r_{ij} - \frac{(2\nu+1)\pi}{4}\right) + O_{d,\delta}\brax{\sum_{i \ne j}r_{ij}^{-\nu-\frac{3}{2}}}
            \\
            &\le O_{d,\delta}(n) - n(n-1) A (2X)^{-\nu-\frac{1}{2}}+ O_{d,\delta}\brax{\sum_{i \ne j}r_{ij}^{-\nu-\frac{3}{2}}}.
        \end{split}
    \end{equation}

    We will now estimate the third term in the right-hand side above carefully. For each fixed $i \in [n]$, denote the number of points within distance $u$ of $p_i$ by
    \begin{equation*}
        N_i(u) \coloneqq |\set{j \neq i \,\colon\, |p_i-p_j| \le u}|.
    \end{equation*}
    Then for each $i \in [n]$, we have 
    \begin{equation}
    \label{eqn:r}
        \sum_{j \in [n] \setminus \set{i}} r_{ij}^{-\nu-\frac{3}{2}} \le \int_{\delta/2}^{4X} u^{-\nu-\frac{3}{2}} \d N_i(u) = (4X)^{-\nu-\frac{3}{2}}N_i(4X) + \left(\nu + \frac{3}{2}\right)\int_{\delta/2}^{4X} N_i(u)u^{-\nu-\frac{5}{2}} \d u.
    \end{equation}
    We break the rightmost integral above into two parts, namely,
    \begin{equation}
    \label{eqn:breakint}
        \int_{\delta/2}^{4X} N_i(u)u^{-\nu-\frac{5}{2}} \d u = \int_{\delta/2}^{2} N_i(u)u^{-\nu-\frac{5}{2}} \d u + \int_{2}^{4X} N_i(u)u^{-\nu-\frac{5}{2}} \d u.
    \end{equation}
    Now note that $N_i(u) \le N_{d+1}(u, \delta)$. By Lemma~\ref{lem:ind}, $N_{d+1}(u, \delta)$ is in turn bounded above by $\lceil 4u/\delta \rceil N_d(u,\delta/2) = O_{d,\delta}(u f_d(u))$ as $u \rightarrow \infty$. Plugging these estimates in \eqref{eqn:breakint}, we obtain
    \begin{equation*}
        \int_{\delta/2}^{4X} N_i(u)u^{-\nu-\frac{5}{2}} \d u \le \int_{\delta/2}^{2} N_{d+1}(2, \delta) u^{-\nu-\frac{5}{2}} \d u + O_{d,\delta}\left(\int_{2}^{4X} f_d(u) u^{-\nu-\frac{3}{2}} \d u\right).
    \end{equation*}
    Using the above along with the estimate $N_i(u) = O_{d,\delta}(u f_d(u))$ in \eqref{eqn:r}, we obtain
    \begin{equation*}
        \sum_{j \in [n] \setminus \set{i}} r_{ij}^{-\nu-\frac{3}{2}} = O_{d,\delta}\left(X^{-\nu-\frac{1}{2}} f_d(4X) + 1 + \int_{2}^{4X} f_d(u)u^{-\nu-\frac{3}{2}} \d u\right).
    \end{equation*}
    Finally, using the above estimate in \eqref{eq:nearlybounded} and replacing $\nu$ with $\frac{d-1}{2}$, we obtain
    \begin{equation*}
        0 \le O_{d,\delta}(n) - n(n-1) A (2X)^{-\frac{d}{2}}+ O_{d,\delta}\brax{nX^{-\frac{d}{2}}f_d(4X) + n\int_{2}^{4X} f_d(u)u^{-\frac{d}{2}-1} \d u}.
    \end{equation*}
    Taking the second term on the right-hand side above to the left-hand side and multiplying both sides by $X^{\frac{d}{2}}/n$, we obtain
    \begin{equation*}
        n =  O_{d,\delta}\brax{X^{\frac{d}{2}}\brax{1+ \int_{2}^{4X} f_d(u)u^{-\frac{d}{2}-1} \d u} + f_d(4X)},
    \end{equation*}
    as desired.
\end{proof}

\section{Lower bounds}
\label{sec:lower}

In this section, we prove Theorem~\ref{thm:lower}. The first construction is inspired by the two-dimensional construction of S\'ark\"ozy~\cite{Sar2}. We begin with an elementary lemma, which we shall use both in the three-dimensional construction and in the snowflake-lift argument for dimension four. This lemma was used by S\'ark\"ozy for his planar construction and also by Graham for an earlier construction. The idea is that if a distance vector has one coordinate $a$ that is an integer, while the squared contribution of the remaining coordinates is comparable to $a$, then the Euclidean distance is forced to lie safely between two consecutive integers. In the three-dimensional construction, the points are concentrated near a paraboloid around the $a$-axis so that every difference vector has this property.

\begin{lemma}
\label{lem:sarkozy}
    Let $0 < \delta < 1/2$ and $a \in \mathbb{N}$. Suppose $r$ is a positive real number satisfying
    \begin{equation*}
        3 \delta \le \frac{r}{a} \le 2(1-\delta).
    \end{equation*}
    Then
    \begin{equation*}
        \intdist{\sqrt{a^2 + r}} > \delta.
    \end{equation*}
\end{lemma}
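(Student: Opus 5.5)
The plan is to locate $\sqrt{a^2+r}$ strictly inside the interval $(a+\delta,\, a+1-\delta)$. Since $\delta < 1/2$, this interval is nonempty and lies inside $(a, a+1)$. Any point of it is at distance more than $\delta$ from both $a$ and $a+1$, and hence from every integer. That gives $\intdist{\sqrt{a^2+r}} > \delta$. Both sides of each target inequality are positive, so it suffices to compare squares: we want
\begin{equation*}
    (a+\delta)^2 < a^2 + r < (a+1-\delta)^2 .
\end{equation*}

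For the lower bound, expand $(a+\delta)^2 = a^2 + 2a\delta + \delta^2$. So we need $r > 2a\delta + \delta^2$. The hypothesis gives $r \ge 3a\delta = 2a\delta + a\delta$. Since $a \ge 1 > \delta$, we have $a\delta > \delta^2$, and the strict inequality follows.

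For the upper bound, expand $(a+1-\delta)^2 = a^2 + 2a(1-\delta) + (1-\delta)^2$. The hypothesis gives $r \le 2a(1-\delta)$, and $(1-\delta)^2 > 0$, so $a^2 + r < (a+1-\delta)^2$ strictly.

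There is no real obstacle here. The only points needing care are that $a \ge 1$ (so that $a\delta > \delta^2$ in the lower bound) and that $\delta < 1/2$ (so that the nearest integers to a point of $(a+\delta, a+1-\delta)$ are $a$ and $a+1$).
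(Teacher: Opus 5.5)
Your proposal is correct and follows essentially the same approach as the paper: both square the target bounds, use $r \ge 3\delta a > 2\delta a + \delta^2$ (from $a \ge 1 > \delta$) and $r \le 2a(1-\delta) < 2a(1-\delta) + (1-\delta)^2$, and take square roots to place $\sqrt{a^2+r}$ in $(a+\delta,\, a+1-\delta)$. You state the justifications ($a\delta > \delta^2$, strictness) a little more explicitly than the paper does.
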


\begin{proof}
    We have
    \begin{equation*}
        a^2 + 3\delta a \le  a^2 + r \le a^2 + 2a(1-\delta),
    \end{equation*}
    which implies
    \begin{equation*}
        a^2 + 2\delta a + \delta^2 <  a^2 + r < a^2 + 2a(1-\delta) + (1-\delta)^2.
    \end{equation*}
    Taking square roots throughout, we obtain
    \begin{equation*}
        a + \delta < \sqrt{a^2 + r} < a + (1-\delta),
    \end{equation*}
    which implies the desired result.
\end{proof}

\begin{proposition}
\label{prop:lower3}
    Let $0 < \delta \le \frac{1}{48 \cdot 3^5}$. Then there exists a positive constant $X_0(\delta)$ such that
    \begin{equation*}
        N_3(X, \delta) \ge \delta^{6/5} X^{1-6\delta^{1/5}}, \text{ for all } X \ge X_0(\delta).
    \end{equation*}
\end{proposition}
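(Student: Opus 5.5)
The plan is to follow S\'ark\"ozy's multiscale construction, but to use the two extra coordinates in $\mathbb{R}^3$ to give the ``transverse'' part more room. Every point will have the form
\begin{equation*}
    p = (a, y(a)), \qquad a \in \mathbb{N}, \quad y(a) \in \mathbb{R}^2 .
\end{equation*}
For two points with $a > a'$ we then have $|p-p'|^2 = (a-a')^2 + r$ with $r = |y(a)-y(a')|^2$. By Lemma~\ref{lem:sarkozy}, the proposition follows once every pair satisfies
\begin{equation*}
    3\delta \le \frac{|y(a)-y(a')|^2}{a-a'} \le 2(1-\delta).
\end{equation*}
So the real task is to find a large set $\mathcal{A} \subset [0,X/2]$ of integers and a map $y \colon \mathcal{A} \to B_2(0,X/2)$ that is a bilipschitz embedding of the snowflake $(\mathcal{A}, |\cdot|^{1/2})$ with distortion about $1/\delta$. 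The size of $\mathcal{A}$ will be $X^{1-6\delta^{1/5}}$ up to the constant $\delta^{6/5}$.

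\textbf{Step 1: the digit set and the map.} Fix a base $q$ and a gap parameter $g$, both negative powers of $\delta$; I expect $q \approx \delta^{-1/5}$ up to constants to be the right scale given the exponent $6\delta^{1/5}$. Let $L = \lfloor \log_q(X/2) \rfloor$. Take $\mathcal{A}$ to be the integers $a = \sum_{s<L} d_s q^s$ whose digits all lie in a fixed set $D \subset \{0,\dots,q-1\}$, where any two distinct elements of $D$ differ by at least $g$. Define
\begin{equation*}
    y(a) = \sum_{s<L} q^{s/2}\, v(d_s),
\end{equation*}
where $v \colon D \to \mathbb{R}^2$ is a single-scale gadget with
\begin{equation*}
    c_1 |d-d'| \le |v(d)-v(d')|^2 \le c_2 |d-d'| \quad \text{for all } d \ne d' \in D,
\end{equation*}
and $c_2/c_1$ bounded by a small power of $1/\delta$. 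Then $|\mathcal{A}| = |D|^L = X^{\log|D|/\log q}$ up to constants. We therefore need $|D| \ge q^{1-6\delta^{1/5}}$, or at least $\log|D|/\log q \ge 1 - 6\delta^{1/5}$.

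\textbf{Step 2: error control.} Let $s$ be the top level at which the digits of $a$ and $a'$ differ. Then $a - a' = (d_s - d'_s)q^s + E$ with $|E| < q^s$, so the relative error is at most $1/g$. Similarly, the lower levels change $y$ by at most $O(\max_D |v| \cdot q^{(s-1)/2})$. This is a relative error $O(\mathrm{diam}\, v(D)/(\sqrt q\, |v(d_s)-v(d'_s)|))$, which should be small provided $v(D)$ is not too spread out compared with $\sqrt q$. Combining the two gives
\begin{equation*}
    \frac{r}{a-a'} \in \left[\tfrac{c_1}{2}, 2c_2\right],
\end{equation*}
and rescaling $v$ puts this inside $[3\delta, 2(1-\delta)]$. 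The constraint $\delta \le 1/(48\cdot 3^5)$ should be exactly what makes these error terms fit.

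\textbf{Main obstacle: the gadget $v$.} A one-dimensional gadget cannot work. With $v$ on a line, $|v(d)-v(d')|^2 \sim |d-d'|^2$, so the distortion is $|D|$. Combined with the gap requirement $g$ (needed to absorb the lower-digit error in $a-a'$), this forces $\log|D|/\log q \le 1/2$, consistent with Konyagin's planar bound. The point of the second transverse coordinate is that a planar set can realise the snowflake of a large subset of $\{0,\dots,q-1\}$ with distortion only $O(1/\delta)$. My first attempt is to build $v$ itself two-level, writing $d = u + g w$ with the coarse index $w$ and fine index $u$ assigned to orthogonal directions at scales $\sqrt{g}$ and $1$; alternatively I would nest further levels inside $[0,q)$. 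Then $|D|$ loses only a factor of about $q^{O(\delta^{1/5})}$, provided $g$ is a small power of $q$. I expect balancing $g$, $q$ and the distortion, so that the final exponent comes out as $1-6\delta^{1/5}$ with the constant $\delta^{6/5}$, to be the delicate part. The remaining checks are that $\mathcal{A}$ and $y(\mathcal{A})$ fit in a ball of radius $X$ once $X \ge X_0(\delta)$, and that all points are distinct.
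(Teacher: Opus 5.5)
Your reduction is the right one and is the paper's: keep one coordinate integral, make the squared transverse displacement comparable to the integer gap, and apply Lemma~\ref{lem:sarkozy}. Up to rescaling, the paper's point set even has your shape, with $q=k^2$, $d_i=\alpha_i+k\beta_i$ and $v(\alpha+k\beta)=(\alpha,\beta)$. The genuine gap is that the construction itself is missing, and the framework you impose on the gadget cannot give the stated bound.

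\begin{itemize}
\item \textbf{The gap costs too much.} $D$ is $g$-separated in $\{0,\dots,q-1\}$ with $g\ge 2$, since you need $1/g<1$ in Step~2. Hence $|D|\le q/g+1$, every level costs a factor of about $g$, and the exponent $\log|D|/\log q$ is at most about $1-\log g/\log q$.
\item \textbf{Your parameter regime loses too much.} With $q=\delta^{-a}$ and $g=\delta^{-b}$, as you propose, this is a fixed loss $b/a$. Even with $g=2$ and $q$ any power of $1/\delta$, the loss is of order $1/\log(1/\delta)$. Either way it exceeds $6\delta^{1/5}$ once $\delta$ is small. So $q\approx\delta^{-1/5}$ is the right base for the paper's construction, not for yours.
\item \textbf{Fixing the regime makes the argument circular.} Bringing the loss below $6\delta^{1/5}$ needs $\log q\ge\log 2/(6\delta^{1/5})$. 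Then $v$ must still have snowflake distortion $O(1/\delta)=O((\log q)^5)$ on a dense subset of $\{0,\dots,q-1\}$. Such a $v$ is itself a many-level construction of exactly the kind the proposition asks for.
\item \textbf{Your two-level candidate does not come close.} For $v(u+gw)=(u,\sqrt g\,w)$ the snowflake ratios are $|u-u'|$ along one axis and $|w-w'|$ along the other. Its distortion is therefore about $\max(g,q/g)\ge\sqrt q$.
\end{itemize}

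The missing idea is that you need neither a gap nor a low-distortion single-level gadget. The paper's $D$ above has gap $1$ and single-level distortion of order $k^3$.

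\begin{itemize}
\item \textbf{Construction.} Take base $k$ with $\delta\le 1/(48k^5)$. Use digits in $\{0,\dots,k-2\}$ for each transverse coordinate separately, $x=\sum_i\alpha_ik^i$ and $y=\sum_i\beta_ik^i$. Interleave them in the integer coordinate: $z=8\bigl(\sum_i\alpha_ik^{2i+2}+\sum_i\beta_ik^{2i+3}\bigr)$.
\item \textbf{No cancellation.} Omitting the digit $k-1$ makes the lower digits of each of $x$, $y$, $z$ sum to at most $\frac{k-2}{k-1}$ of one unit at the top differing position. So $a,b,c$ are each determined up to a factor $2k$ by their top differing digit.
\item \textbf{Coordinatewise dominance suffices.} You do not need the $\ell^2$ condition on $\mathrm{diam}\,v(D)$. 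Whichever of $x,y$ owns the top differing digit of $z$ supplies the lower bound, since $b^2+c^2\ge\max(b^2,c^2)$.
\item \textbf{Conclusion.} This gives $1/(16k^5)\le(b^2+c^2)/a\le 1$, so all factor-$k$ losses are absorbed by the choice of $\delta$. The density loss per level is only $(1-1/k)^2$. That yields $(k-1)^{2t+2}>X^{1-2/k}/(16k^6)>\delta^{6/5}X^{1-6\delta^{1/5}}$ points, where $16k^{2t+4}\le X<16k^{2t+6}$.
\end{itemize}
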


\begin{proof}
    Let $0 < \delta \le \frac{1}{48 \cdot 3^5}$. The first step is to carefully choose parameters $k$ and $t$, and then describe their relationships to each other and to $X$ and $\delta$. Let $k$ be the unique positive integer such that 
    \begin{equation*}
        \frac{1}{48k^5} \ge \delta > \frac{1}{48(k+1)^5}.
    \end{equation*}
    Then it follows that $k \ge 3$. Let $X_0(\delta) = 1/\delta$, and suppose $X \ge X_0(\delta)$. Let $t$ be the unique nonnegative integer such that 
    \begin{equation*}
        16k^{2t+4} \le X < 16k^{2(t+1)+4}.
    \end{equation*}
    Note that $t$ is well defined since $16 k^4 \le 48 k^5 \le 1/\delta = X_0(\delta) \le X$. By rearranging the equation defining $t$ we have
    \begin{equation}
        \label{eq:rearrange1}
        k^{2t} > \frac{X}{16k^6}.
    \end{equation}
    Similarly, we also have
    \begin{equation}
        \label{eq:rearrange2}
        t \le \frac{\log X}{2 \log k}\quad \text{ and }\quad
        k > \frac{\delta^{-1/5}}{3}.
    \end{equation}

    \begin{figure}[!t]
        \centering
        \includegraphics[width=0.8\linewidth]{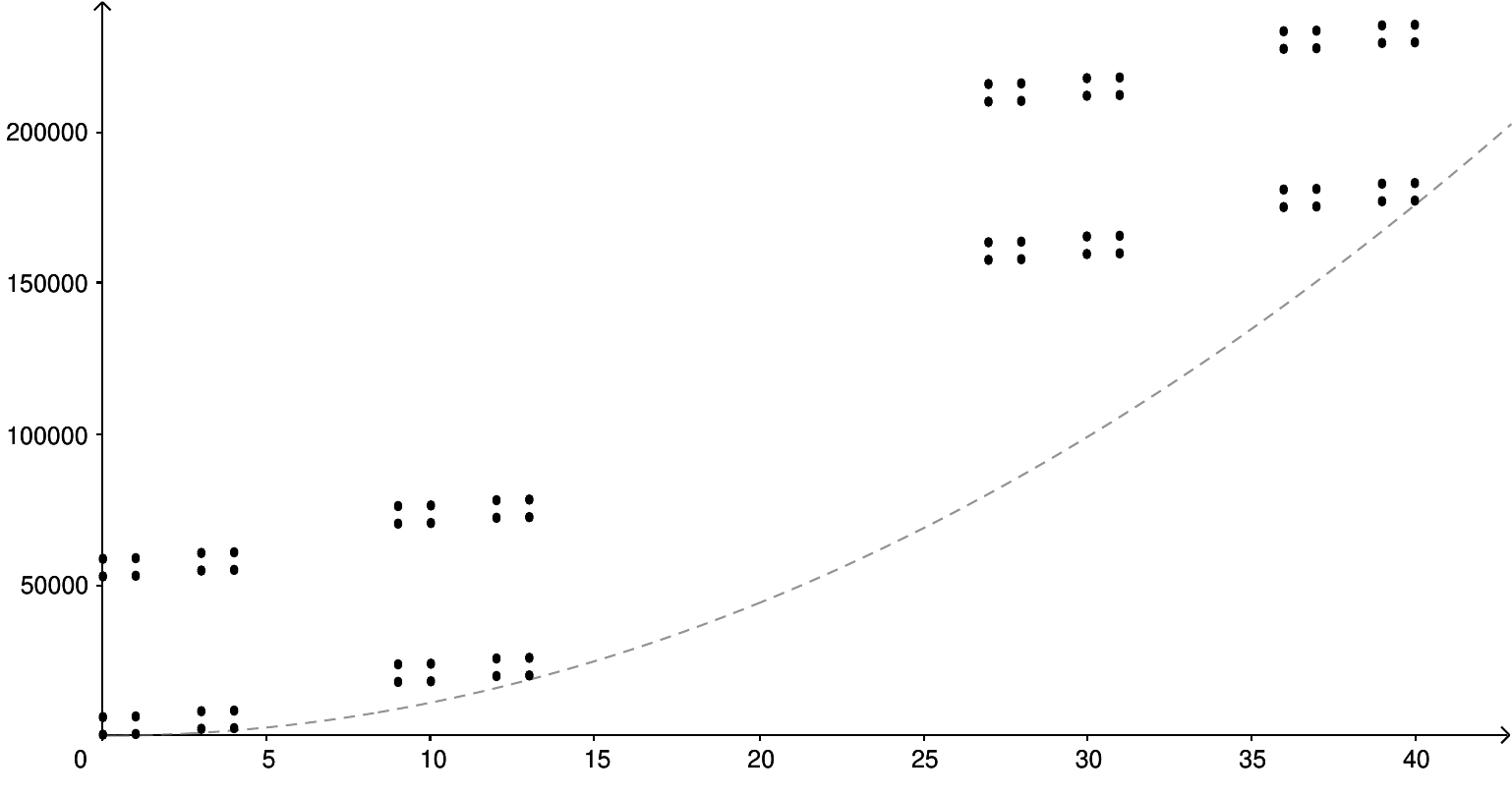}
        \caption{The construction with $k = t = 3$, projected onto the $y$-$z$ plane, together with a section of the paraboloid $z = 110(x^2+y^2)$. The plot is shown at a limited resolution, so some nearby points appear visually clumped.}
        \label{fig:r3}
    \end{figure}

    Now we describe the set $P$ of points in the construction. For each pair of sequences of integers $\{\alpha_i^{(p)}\}_{i=0}^t$, $\{\beta_i^{(p)}\}_{i=0}^t \in \set{0, \dots, k-2}^{t+1}$, define the point $p$ by the coordinates
    \begin{equation*}
        x^{(p)} = \sum_{i=0}^t \alpha_i^{(p)}k^{i}, \quad y^{(p)} = \sum_{i=0}^t \beta_i^{(p)}k^{i}, \quad z^{(p)} = 8 \left(\sum_{i=0}^{t} \alpha_{i}^{(p)} k^{2i+2} + \sum_{i=0}^{t} \beta_{i}^{(p)} k^{2i+3}\right).
    \end{equation*}
    Then for each such point $p = (x^{(p)}, y^{(p)}, z^{(p)})$, we have
    \begin{equation*}
        |x^{(p)}| \le k^{t+1}, \quad |y^{(p)}| \le k^{t+1}, \quad \text{and} \quad |z^{(p)}| \le 8 k^{2t+4}, 
    \end{equation*}
    which implies
    \begin{equation*}
        |p| = \sqrt{(x^{(p)})^2+(y^{(p)})^2+(z^{(p)})^2} \le \sqrt{3(8k^{2t+4})^2} \le 16k^{2t+4} \le X.
    \end{equation*}
    Moreover, using \eqref{eq:rearrange1} and \eqref{eq:rearrange2}, we find the total number of points in our construction is
    \begin{equation*}
        (k-1)^{2(t+1)} \ge (k-1)^{2t} = k^{2t} \left(1 - \frac{1}{k}\right)^{2t} \ge k^{2t} e^{-4t/k} > \frac{X^{1-2/k}}{16 k^6} > \delta^{6/5} X^{1-2/k} > \delta^{6/5} X^{1-6 \delta^{1/5}}.
    \end{equation*}
    For example, see Figure~\ref{fig:r3}, which illustrates the case $X = 10^6$ and $\delta = 5 \times 10^{-5}$, for which $k = t = 3$ and the construction yields $256$ points. Observe that the set has a self-similar structure.

    It remains to verify that integer distances are avoided by every pair of points in $P$. Let $p$ and $q$ be distinct points in $P$ and set
    \begin{equation*}
        a = |z^{(p)} - z^{(q)}|,\ \ b = |x^{(p)} - x^{(q)}|,\ \ \text{and}\ \ c = |y^{(p)} - y^{(q)}|.
    \end{equation*}
    Further, let $i_0$ and $j_0$ be the largest indices where $p$ and $q$ differ in their $x$-coordinate and $y$-coordinate digit sequences, respectively. That is, the largest indices where $\{\alpha_i^{(p)}\}_{i=0}^t$ and $\{\beta_i^{(p)}\}_{i=0}^t$ differ from $\{\alpha_i^{(q)}\}_{i=0}^t$ and $\{\beta_i^{(q)}\}_{i=0}^t$, respectively. We adopt the convention that $i_0 = -1$ if $p$ and $q$ do not differ in their $x$-coordinate sequences, and similarly for $j_0$. Additionally, we set $\alpha_{-1}^{(p)} = \alpha_{-1}^{(q)} = \beta_{-1}^{(p)} = \beta_{-1}^{(q)} = 0$. Further, we interpret sums with upper limit smaller than the lower limit as empty sums. Note that at least one of $i_0$ and $j_0$ must be nonnegative by the distinctness of $p$ and $q$. We have
    \begin{align*}
        \left|(x^{(p)} - x^{(q)}) - (\alpha_{i_0}^{(p)}-\alpha_{i_0}^{(q)})k^{i_0}\right| &\le \sum_{i=0}^{i_0-1} (k-2) k^{i} \le \frac{k-2}{k-1} k^{i_0},\quad \text{and}\\
        \left|(y^{(p)} - y^{(q)}) - (\beta_{j_0}^{(p)}-\beta_{j_0}^{(q)})k^{j_0}\right| &\le \sum_{i=0}^{j_0-1} (k-2) k^{i} \le \frac{k-2}{k-1} k^{j_0},
    \end{align*}
    for $i_0 \ge 0$ and $j_0 \ge 0$, respectively. Combining with the cases $i_0 = -1$ and $j_0 = -1$, we obtain
    \begin{equation}
     \label{eqn:bc}
    \begin{split}
        |\alpha_{i_0}^{(p)}-\alpha_{i_0}^{(q)}| k^{i_0-1} &\le b \le 2 |\alpha_{i_0}^{(p)}-\alpha_{i_0}^{(q)}| k^{i_0},\quad \text{and}\\
        |\beta_{j_0}^{(p)}-\beta_{j_0}^{(q)}| k^{j_0-1} &\le c \le 2 |\beta_{j_0}^{(p)}-\beta_{j_0}^{(q)}| k^{j_0}.
    \end{split}
    \end{equation}
   
    The goal now is to get a similar estimate on $a$, and use these estimates along with Lemma~\ref{lem:sarkozy} to complete the proof. There are two cases, depending on which of $i_0$ or $j_0$ is larger.

    \Case{1} Suppose $i_0 > j_0$. Then we have
    \begin{equation*}
        \frac{1}{8} (z^{(p)} - z^{(q)}) = (\alpha_{i_0}^{(p)}-\alpha_{i_0}^{(q)})k^{2i_0+2} + \sum_{i=0}^{i_0-1} (\alpha_i^{(p)} - \alpha_i^{(q)}) k^{2i+2} + \sum_{i=0}^{j_0} (\beta_i^{(p)} - \beta_i^{(q)}) k^{2i+3},
    \end{equation*}
    which implies
    \begin{equation*}
        \left|\frac{1}{8} (z^{(p)} - z^{(q)}) - (\alpha_{i_0}^{(p)}-\alpha_{i_0}^{(q)})k^{2i_0+2}\right| \le \sum_{i=0}^{i_0-1} (k-2)(k+1) k^{2i+2} \le \frac{k-2}{k-1} \cdot k^{2i_0+2}.
    \end{equation*}
    Rearranging the above inequality yields
    \begin{align*}
        8|\alpha_{i_0}^{(p)}-\alpha_{i_0}^{(q)}| k^{2i_0+1} \le a \le 16 |\alpha_{i_0}^{(p)}-\alpha_{i_0}^{(q)}| k^{2i_0+2}.
    \end{align*}
    The above inequality together with \eqref{eqn:bc} implies
    \begin{align*}
        \frac{b^2 + c^2}{a} &\ge \frac{(|\alpha_{i_0}^{(p)}-\alpha_{i_0}^{(q)}| k^{i_0-1})^2}{16 |\alpha_{i_0}^{(p)}-\alpha_{i_0}^{(q)}| k^{2i_0+2}} \ge \frac{1}{16k^4} \ge 3\delta,\quad \text{and}\\
        \frac{b^2 + c^2}{a} &\le \frac{(2 |\alpha_{i_0}^{(p)}-\alpha_{i_0}^{(q)}| k^{i_0})^2 + (2 |\beta_{j_0}^{(p)}-\beta_{j_0}^{(q)}| k^{j_0})^2}{8|\alpha_{i_0}^{(p)}-\alpha_{i_0}^{(q)}| k^{2i_0+1}} \le \frac{2 (2 |\alpha_{i_0}^{(p)}-\alpha_{i_0}^{(q)}| k^{i_0})^2}{8|\alpha_{i_0}^{(p)}-\alpha_{i_0}^{(q)}| k^{2i_0+1}} \le 1 \le 2(1-\delta). 
    \end{align*}
    Therefore, applying Lemma~\ref{lem:sarkozy} with $r = b^2+c^2$, we obtain $\intdist{\sqrt{a^2+b^2+c^2}} > \delta$ as required.

    \Case{2} Suppose $i_0 \le j_0$. Then we have
    \begin{equation*}
        \frac{1}{8} (z^{(p)} - z^{(q)}) = ((\alpha_{j_0}^{(p)} - \alpha_{j_0}^{(q)}) + k (\beta_{j_0}^{(p)}-\beta_{j_0}^{(q)})) k^{2j_0+2} + \sum_{i=0}^{j_0-1} ((\alpha_i^{(p)} - \alpha_i^{(q)}) + k (\beta_i^{(p)} - \beta_i^{(q)})) k^{2i+2},
    \end{equation*}
    which implies
    \begin{equation*}
        \left|\frac{1}{8} (z^{(p)} - z^{(q)}) - ((\alpha_{j_0}^{(p)} - \alpha_{j_0}^{(q)}) + k (\beta_{j_0}^{(p)}-\beta_{j_0}^{(q)})) k^{2j_0+2}\right| \le \sum_{i=0}^{j_0-1} (k-2)(k+1) k^{2i+2} \le \frac{k-2}{k-1} \cdot k^{2j_0+2}.
    \end{equation*}
    Rearranging the above inequality and using $|\alpha_{j_0}^{(p)} - \alpha_{j_0}^{(q)}| \le k-2$ yields
    \begin{align*}
        8|\beta_{j_0}^{(p)}-\beta_{j_0}^{(q)}| k^{2j_0+2} \le a \le 16 |\beta_{j_0}^{(p)}-\beta_{j_0}^{(q)}| k^{2j_0+3}.
    \end{align*}
    The above inequality together with \eqref{eqn:bc} implies
    \begin{align*}
        \frac{b^2 + c^2}{a} &\ge \frac{(|\beta_{j_0}^{(p)}-\beta_{j_0}^{(q)}| k^{j_0-1})^2}{16 |\beta_{j_0}^{(p)}-\beta_{j_0}^{(q)}| k^{2j_0+3}} \ge \frac{1}{16k^5} \ge 3\delta,\quad \text{and}\\
        \frac{b^2 + c^2}{a} &\le \frac{(2 |\alpha_{i_0}^{(p)}-\alpha_{i_0}^{(q)}| k^{i_0})^2 + (2 |\beta_{j_0}^{(p)}-\beta_{j_0}^{(q)}| k^{j_0})^2}{8|\beta_{j_0}^{(p)}-\beta_{j_0}^{(q)}| k^{2j_0+2}} \le \frac{2 (2 k^{j_0+1})^2}{8|\beta_{j_0}^{(p)}-\beta_{j_0}^{(q)}| k^{2j_0+2}} \le 1 \le 2(1-\delta). 
    \end{align*}
    Finally, applying Lemma~\ref{lem:sarkozy} with $r = b^2+c^2$, we obtain $\intdist{\sqrt{a^2+b^2+c^2}} > \delta$ as required.
\end{proof}

In the following lemma, we formalise the snowflake embedding construction outlined in the introduction.

\begin{lemma}
\label{lem:snowflake_lift}
    Let $A \subset B_k(0, R)$ be a finite set such that $|p - q| \in \mathbb{N}$ for all distinct $p, q \in A$. Suppose that $\phi \,\colon\, (B_k(0,R), |\cdot|^{1/2}) \rightarrow \mathbb{R}^m$ is bilipschitz with constants $0 < c_\phi \le C_\phi$ so that
    \begin{equation*}
        c_\phi |p-q|^{1/2} \le |\phi(p)-\phi(q)| \le C_\phi |p-q|^{1/2}
        \quad \text{for all } p,q\in B_k(0,R),
    \end{equation*}
    and $\phi(0) = 0$. Set
    \begin{equation*}
        \delta_\phi \coloneqq \frac{2c_\phi^2}{3C_\phi^2+2c_\phi^2}.
    \end{equation*}
    If $0 < \delta \le \delta_\phi$ and $\lambda$ satisfies
    \begin{equation*}
        \frac{\sqrt{3\delta}}{c_\phi} \le \lambda \le \frac{\sqrt{2(1-\delta)}}{C_\phi},
    \end{equation*}
    then the set $P_\lambda(A) \coloneqq \set{(p,\lambda\phi(p)) \,\colon\, p \in A} \subset \mathbb{R}^{k+m}$ satisfies \eqref{eqn:main}. Moreover, $P_\lambda(A)$ is contained in the closed Euclidean ball of radius $(R^2 + 2 R)^{1/2}$.
\end{lemma}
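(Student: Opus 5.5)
The plan is to reduce everything to Lemma~\ref{lem:sarkozy}. Take distinct $p,q \in A$ and set $a = |p-q|$; by hypothesis $a \in \mathbb{N}$, and $a \ge 1$ because $p \ne q$. The distance between the lifted points $(p,\lambda\phi(p))$ and $(q,\lambda\phi(q))$ is $\sqrt{a^2 + r}$, where $r = \lambda^2 |\phi(p)-\phi(q)|^2$. The bilipschitz hypothesis gives
\begin{equation*}
    \lambda^2 c_\phi^2\, a \le r \le \lambda^2 C_\phi^2\, a .
\end{equation*}
The constraint on $\lambda$ yields $\lambda^2 c_\phi^2 \ge 3\delta$ and $\lambda^2 C_\phi^2 \le 2(1-\delta)$. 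Hence $3\delta \le r/a \le 2(1-\delta)$, and $r > 0$. Lemma~\ref{lem:sarkozy} then gives $\intdist{\sqrt{a^2+r}} > \delta$, which is \eqref{eqn:main}.

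The one place that needs care is checking that the interval of admissible $\lambda$ is nonempty. That is where $\delta \le \delta_\phi$ enters. Nonemptiness is the condition $3\delta/c_\phi^2 \le 2(1-\delta)/C_\phi^2$, which rearranges to
\begin{equation*}
    \delta\,(3C_\phi^2 + 2c_\phi^2) \le 2c_\phi^2,
\end{equation*}
that is, $\delta \le \delta_\phi$. We should also note that $\delta_\phi \le 2/5 < 1/2$, so $\delta$ lies in the range required by Lemma~\ref{lem:sarkozy}.

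For the containment claim, take $p \in A \subset B_k(0,R)$. Since $\phi(0)=0$, we have
\begin{equation*}
    |\lambda\phi(p)|^2 \le \lambda^2 C_\phi^2 |p| \le 2(1-\delta) R \le 2R.
\end{equation*}
Therefore $|(p,\lambda\phi(p))|^2 \le R^2 + 2R$, so $P_\lambda(A)$ lies in the closed ball of radius $(R^2+2R)^{1/2}$.

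I do not expect a genuine obstacle. The steps that need attention are the algebra identifying $\delta_\phi$ and remembering that $r$ must be strictly positive, which follows from $a \ge 1$ and $c_\phi > 0$.
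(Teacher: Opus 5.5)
Your proposal is correct and follows essentially the same route as the paper: you write the lifted distance as $\sqrt{a^2+r}$, use the bilipschitz bounds together with the constraints on $\lambda$ to get $3\delta \le r/a \le 2(1-\delta)$, apply Lemma~\ref{lem:sarkozy}, and bound $|\phi(p)|$ via $\phi(0)=0$ for the containment. Your explicit checks that the admissible $\lambda$-interval is nonempty exactly when $\delta \le \delta_\phi$, and that $\delta_\phi \le 2/5 < 1/2$, fill in details the paper dismisses as ``easy to check''.
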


\begin{proof}
    It is easy to check that the choice of $\delta_\phi$ ensures that the interval of permissible values of $\lambda$ is nonempty. For distinct $p,q\in A$, write $a = |p-q| \in \mathbb{N}$ and
    \begin{equation*}
        r=\lambda^2|\phi(p)-\phi(q)|^2.
    \end{equation*}
    The bilipschitz bounds give
    \begin{equation*}
        \lambda^2c_\phi^2 a \le r \le \lambda^2C_\phi^2 a,
    \end{equation*}
    and hence, by the choice of $\lambda$, we get
    \begin{equation*}
        3\delta \le \frac{r}{a} \le 2(1-\delta).
    \end{equation*}
    Applying Lemma~\ref{lem:sarkozy} gives
    \begin{equation*}
        \intdist{\left(a^2+r\right)^{1/2}}>\delta.
    \end{equation*}
    The expression on the left-hand side above is exactly the distance between $(p,\lambda\phi(p))$ and $(q,\lambda\phi(q))$. Finally, for $p\in A$, we have $|\phi(p)| \le C_\phi |p|^{1/2} \le C_\phi R^{1/2}$, so
    \begin{equation*}
        |(p,\lambda\phi(p))|^2\le R^2+\lambda^2C_\phi^2R \le R^2 + 2R,
    \end{equation*}
    where the last inequality follows from the choice of $\lambda$.
\end{proof}

\begin{proposition}
\label{prop:lower4}
    There exists $\delta_1>0$ such that for every $0<\delta\le \delta_1$,
    \begin{equation*}
        N_4(X, \delta) = \Omega_\delta(X).
    \end{equation*}
\end{proposition}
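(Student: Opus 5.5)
The plan is to apply Lemma~\ref{lem:snowflake_lift} with $k=1$, taking $A=\set{1,\dots,\lfloor X/2\rfloor}\subset B_1(0,R)$ with $R=\lfloor X/2\rfloor$. All pairwise distances in $A$ are positive integers, so the hypothesis on $A$ holds. The only remaining ingredient is a bilipschitz map $\phi\colon(B_1(0,R),|\cdot|^{1/2})\to\mathbb{R}^3$ with $\phi(0)=0$ whose distortion $C_\phi/c_\phi$ is bounded independently of $R$. This uniformity is essential: $\delta_\phi$, and hence the admissible range of $\delta$, depends only on the ratio $C_\phi/c_\phi$.

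To build $\phi$, I would invoke Assouad's theorem in its quantitative form. For $0<\alpha<1$ and a doubling metric space, the snowflake $(Y,\rho^\alpha)$ bilipschitz embeds into some $\mathbb{R}^N$, with $N$ and the distortion depending only on $\alpha$ and the doubling constant. Apply this to $Y=\mathbb{R}$ with $\rho=|\cdot|$ and $\alpha=1/2$. Since $(\mathbb{R},|\cdot|)$ has doubling constant $2$, we get a single embedding of $(\mathbb{R},|\cdot|^{1/2})$ with fixed distortion, and restricting to $B_1(0,R)$ gives uniformity in $R$ for free. Translating ensures $\phi(0)=0$.

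The main obstacle is the target dimension. Lemma~\ref{lem:snowflake_lift} then places the set in $\mathbb{R}^{1+N}$, and we need $N=3$. Assouad's general theorem gives only some $N$, so the proposition amounts to the specific fact that the half-snowflake of the line embeds into $\mathbb{R}^3$. If this is not directly citable, I would construct it by hand. First try the standard helix-type Kahane/Assouad construction
\[
\phi(t)=\sum_{j\in\mathbb{Z}}2^{-j/2}\bigl(\psi(2^jt)-\psi(0)\bigr),
\]
where $\psi\colon\mathbb{R}\to\mathbb{R}^3$ is a smooth periodic curve. The terms are arranged so that consecutive scales rotate into different planes, for instance by using three phase-shifted circle components cycling through coordinate pairs. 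This prevents cancellation and gives the lower bound $|\phi(s)-\phi(t)|\gtrsim|s-t|^{1/2}$. The upper bound follows routinely by splitting the sum at $2^j\approx|s-t|^{-1}$. An alternative is to cite the known result that snowflakes of $\mathbb{R}$ with exponent $1/2$ embed into $\mathbb{R}^3$, as in the work on low-dimensional snowflake embeddings following Assouad. I expect verifying the lower bound in three dimensions to be the hardest step.

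Given $\phi$, set $\delta_1=\delta_\phi$. For $0<\delta\le\delta_1$, choose any admissible $\lambda$. Lemma~\ref{lem:snowflake_lift} then gives a set of $\lfloor X/2\rfloor$ points satisfying \eqref{eqn:main} in the ball of radius $(R^2+2R)^{1/2}\le R+1\le X$ once $X\ge 2$. Therefore $N_4(X,\delta)\ge\lfloor X/2\rfloor=\Omega(X)$, with the implied constant even independent of $\delta$.
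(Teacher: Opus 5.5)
Your proposal is correct and is essentially the paper's proof: apply Lemma~\ref{lem:snowflake_lift} to $A=[\lfloor X/2\rfloor]$, using an embedding of $(B_1(0,M),|\cdot|^{1/2})$ into $\mathbb{R}^3$ taken from Assouad's sharp theorem. The only difference is how uniformity in $M$ is obtained: you embed all of $\mathbb{R}$ and restrict, while the paper rescales an embedding $\varphi$ of $([0,1],|\cdot|^{1/2})$ via $\widetilde{\varphi}(x)=\sqrt{2M}\,[\varphi(x/(2M)+1/2)-\varphi(1/2)]$, which keeps the constants $c,C$. The citation route you list as an alternative is exactly what is needed; drop the hand-built lacunary sum, since with ratio $2$ the scales on either side of $|s-t|^{-1}$ contribute as much as the main term, so its lower bound is far from routine and your sketch does not establish it.
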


\begin{proof}
    By Assouad's sharp embedding theorem for the unit interval equipped with a snowflaked Euclidean metric~\cite{Ass}, there is a bilipschitz embedding
    \begin{equation*}
        \varphi \,\colon\, ([0,1],|\cdot|^{1/2}) \rightarrow \mathbb{R}^3
    \end{equation*}
    with bilipschitz constants $0 < c \le C$. Suppose $X \ge 2$. Let $M = \lfloor X/2\rfloor$ and define the map $\widetilde{\varphi} \,\colon\, [-M,M] \rightarrow \mathbb{R}^3$ by
    \begin{equation*}
        \widetilde{\varphi}(x) = \sqrt{2M} \left[\varphi\left(\frac{x}{2M} + \frac{1}{2}\right) - \varphi\left(\frac{1}{2}\right)\right]\quad \text{ for all } x \in [-M, M].
    \end{equation*}
    It follows that $\widetilde{\varphi}$ is a bilipschitz embedding of $(B_1(0,M), |\cdot|^{1/2})$ into $\mathbb{R}^3$ with the same bilipschitz constants as $\varphi$. Moreover, note that $\widetilde{\varphi}(0) = 0$. Set
    \begin{equation*}
        \delta_1 \coloneqq \frac{2c^2}{3C^2 + 2c^2}.
    \end{equation*}
    Fix $0 < \delta \le \delta_1$ and choose $\lambda$ such that
    \begin{equation*}
        \frac{\sqrt{3\delta}}{c} \le \lambda \le \frac{\sqrt{2(1-\delta)}}{C}.
    \end{equation*}
    Let $A = [M] \subset \mathbb{R}$. This is an integer distance set contained in $B_1(0,M)$. By Lemma~\ref{lem:snowflake_lift}, the set $P_\lambda(A) = \set{(n,\lambda\widetilde{\varphi}(n)) \,\colon\, n \in [M]} \subset \mathbb{R}^4$
    satisfies \eqref{eqn:main}. It remains only to check that these points lie in the ball of radius $X$. Lemma~\ref{lem:snowflake_lift} gives
    \begin{equation*}
        |(n,\lambda\widetilde{\varphi}(n))|^2\le M^2 + 2M \le X^2/4 + X \le X^2
    \end{equation*}
    for all $n \in [M]$. Thus $N_4(X, \delta) \ge M = \lfloor X/2 \rfloor$, which implies the desired result.
\end{proof}

\begin{remark}
    Assouad's construction for the bilipschitz embedding of $([0,1],|\cdot|^{1/2})$ into $\mathbb{R}^3$ may be viewed as a generalised von Koch fractal curve. His sharp embedding theorem also shows that the ambient dimension $3$ is optimal for such an embedding. In particular, the snowflake-lift argument in Proposition~\ref{prop:lower4} genuinely requires an embedding into $\mathbb{R}^3$, so it cannot yield a linear lower bound in dimension $d = 3$.
\end{remark}

\begin{proof}[Proof of Theorem~\ref{thm:lower}]
    Let $\delta_1$ be the constant from Proposition~\ref{prop:lower4}, and set
    \begin{equation*}
        \delta_0 = \min \set{\frac{1}{48\cdot 3^5}, \left(\frac{\ve}{6}\right)^5, \delta_1}.
    \end{equation*}
    If $0 < \delta \le \delta_0$, then Proposition~\ref{prop:lower3} gives
    \begin{equation*}
        N_3(X,\delta) \ge \delta^{6/5}X^{1-6\delta^{1/5}}
        = \Omega_\delta(X^{1-\ve}),
    \end{equation*}
    while Proposition~\ref{prop:lower4} gives $N_4(X,\delta) = \Omega_\delta(X)$ for the same range of $\delta$.
\end{proof}

\section{Concluding remarks}
\label{sec:conclusion}

Owing to the linear lower bound in dimension four (see Theorem~\ref{thm:lower}), the remaining central question from Erd\H{o}s and S\'ark\"ozy is whether $N_d(X, \delta)$ can be superlinear in $X$ for some fixed dimension $d$ and some fixed $\delta > 0$. It is striking that their question isolates exactly the threshold that becomes natural from the snowflake-lift viewpoint, even though the linear construction in dimension four comes from a tool, Assouad's theorem, that is quite different from the original planar constructions.

This viewpoint suggests a closely related extremal problem for integer distance sets. For $k \in \mathbb{N}$ and $X > 0$, let $I_k(X)$ denote the largest size of a set $A\subset B_k(0,X)$ such that $|p-q| \in \mathbb{N}$ for all distinct $p, q \in A$. The trivial construction on a line gives $I_k(X) \ge \lfloor 2X \rfloor + 1$ for every $k \ge 1$ and $X > 0$. For $k = 1$, this trivial construction is clearly optimal. For $k = 2$, the linear construction is optimal up to constants~\cite[Theorem~2]{Sol}. For $k \ge 3$, a general upper bound follows from the distinct distances theorem of Solymosi and Vu~\cite{SV}. They prove that any $n$-point subset of $\mathbb{R}^k$ determines at least $\Omega_k(n^{2/k - 2/(k(k+2))})$ distinct distances. Since an integer distance subset of $B_k(0,X)$ determines at most $2X$ distinct distances, this yields
\begin{equation*}
    I_k(X)=O_k\left(X^{\frac{k+1}{2} - \frac{1}{2(k+1)}}\right).
\end{equation*}
Thus there is a substantial gap between the known lower and upper bounds for $I_k(X)$.

\begin{problem}
    For $k \ge 3$, determine the order of growth of $I_k(X)$. Does there exist an integer $k \ge 3$ such that $\lim_{X \rightarrow \infty} I_k(X)/X = \infty$?
\end{problem}

Lemma~\ref{lem:snowflake_lift} and the proof of Proposition~\ref{prop:lower4} show that whenever $([0, 1]^k,\|\cdot\|_2^{1/2})$ admits a bilipschitz embedding into some Euclidean space $\mathbb{R}^m$, lower bounds for $I_k(X)$ lift to lower bounds for $N_{k+m}(X, \delta)$, after changing the radius by only a constant factor depending on $\delta$ and the embedding. Therefore, any superlinear lower bound for $I_k(X)$, combined with a snowflake embedding, would give a corresponding superlinear lower bound for $N_d(X, \delta)$ in some higher dimension. However, we suspect that the true growth of $I_k(X)$ is indeed linear in $X$. The methods of \cite{GIP} could be useful in showing this. We remark that the bounds for the distinct distances problem for typical norms on $\mathbb{R}^d$~\cite{ABS} imply a linear upper bound on the size of integer distance sets. On the other hand, integer distance sets in the $\ell^1$ and $\ell^\infty$ norms have size $\Theta_k(X^k)$.

It may also be useful to distinguish between lattice and non-lattice constructions. The three-dimensional construction in Section~\ref{sec:lower}, like S\'ark\"ozy's planar construction~\cite{Sar2}, is a subset of the integer lattice. In contrast, the snowflake-lift construction in Proposition~\ref{prop:lower4} is generally not a lattice set. It would be interesting to know whether the upper bounds for $N_d(X, \delta)$ can be improved under a lattice restriction; techniques from additive combinatorics may be relevant in this regard since the difference sets of such lattice sets are restricted.

One can also ask for analogues for other norms. For $X > 0$, $\delta \in (0, 1/2)$, and a norm $\mathcal{N}$ on $\mathbb{R}^d$, let $N_{\mathcal{N}}(X, \delta)$ denote the maximum number of points that can be chosen from a closed ball of radius $X$ such that all pairwise distances are at least $\delta$ away from any integer. A lower bound of $\Omega_\delta(1)$ is trivial, while an upper bound of $O_\delta(X^{d-1})$ follows from a slabs argument analogous to Lemma~\ref{lem:ind}. For $\ell^p$-norms, the cases $p = 1$ and $p = \infty$ admit an $O_\delta(1)$ upper bound by an argument similar to Lemma~\ref{lem:oned}. For $1 < p < \infty$, one can use the following analogue of Lemma~\ref{lem:sarkozy} to obtain constructions analogous to those in Section~\ref{sec:lower}.

\begin{lemma}
    Let $\delta \in (0, 1/2)$, $p > 1$, and $a \in \mathbb{N}$. Suppose $r$ is a positive real number satisfying
    \begin{equation*}
        p \delta (3/2)^{p-1} \le \frac{r}{a^{p-1}} \le p(1 - \delta).
    \end{equation*}
    Then \begin{equation*}
        \intdist{(a^p + r)^{1/p}} > \delta.
    \end{equation*}
\end{lemma}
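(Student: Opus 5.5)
The plan is to mirror the proof of Lemma~\ref{lem:sarkozy}: we show that $a^p + r$ lies strictly between $(a+\delta)^p$ and $(a+1-\delta)^p$. Taking $p$-th roots, which is monotone on $\mathbb{R}_{\ge 0}$, then gives
\begin{equation*}
    a + \delta < (a^p + r)^{1/p} < a + 1 - \delta,
\end{equation*}
and hence $\intdist{(a^p+r)^{1/p}} > \delta$. The squaring identity used in Lemma~\ref{lem:sarkozy} is replaced by the mean value theorem applied to $g(x) = x^p$. Its derivative $g'(x) = p x^{p-1}$ is strictly increasing on $(0,\infty)$ because $p > 1$.

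For the lower endpoint, the mean value theorem on $[a, a+\delta]$ gives $(a+\delta)^p - a^p = p\delta\, \xi^{p-1}$ for some $\xi \in (a, a+\delta)$. Since $a \ge 1$ and $\delta < 1/2 \le a/2$, we have $\xi < a + \delta < \tfrac{3}{2}a$. Therefore
\begin{equation*}
    (a+\delta)^p - a^p < p\delta \left(\tfrac{3}{2}\right)^{p-1} a^{p-1} \le r,
\end{equation*}
where the last inequality is the lower hypothesis on $r/a^{p-1}$. This is exactly where the factor $(3/2)^{p-1}$ in the hypothesis comes from. Note that the inequality is strict because $\xi^{p-1} < (3a/2)^{p-1}$ when $p > 1$.

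For the upper endpoint, the mean value theorem on $[a, a+1-\delta]$ gives $(a+1-\delta)^p - a^p = p(1-\delta)\eta^{p-1}$ for some $\eta \in (a, a+1-\delta)$. Since $\eta > a$ and $p > 1$, we get $\eta^{p-1} > a^{p-1}$, so
\begin{equation*}
    (a+1-\delta)^p - a^p > p(1-\delta)a^{p-1} \ge r
\end{equation*}
by the upper hypothesis.

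Combining the two estimates yields $(a+\delta)^p < a^p + r < (a+1-\delta)^p$, which completes the argument. There is no real obstacle here. The only point needing care is the bound $a + \delta < \tfrac{3}{2}a$, which uses both $a \in \mathbb{N}$ and $\delta < 1/2$, together with tracking strictness of the inequalities through the strict monotonicity of $x^{p-1}$.
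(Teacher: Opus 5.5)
Your proof is correct. The paper states this lemma without proof, and your argument is the natural extension of its proof of Lemma~\ref{lem:sarkozy}: you sandwich $a^p+r$ strictly between $(a+\delta)^p$ and $(a+1-\delta)^p$, using the mean value theorem for $x^p$ (with $\xi < a+\delta < \tfrac{3}{2}a$ and $\eta > a$) in place of expanding the squares, which for $p=2$ recovers the paper's computation.
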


In particular, generalising S\'ark\"ozy's construction to $\ell^p$-norms, for every $\ve > 0$, one obtains
\begin{equation*}
    N_{\|\cdot\|_p^{(\lceil p/(p-1) \rceil + 1)}}(X, \delta) = \Omega_\delta(X^{1-\ve})
\end{equation*}
for $\delta$ sufficiently small in terms of $\ve$, where $\|\cdot\|_p^{(d)}$ denotes the $\ell^p$-norm in $\mathbb{R}^d$. However, as in the Euclidean case, this particular construction does not extend to dimensions $d > \lceil p/(p-1) \rceil + 1$. Further, generalising the snowflake-lift construction to $\ell^p$-norms yields
\begin{equation*}
    N_{\|\cdot\|_p^{(\lfloor p/(p-1) \rfloor + 2)}}(X, \delta) = \Omega_\delta(X).
\end{equation*}
This suggests an analogue of the Erd\H{o}s--S\'ark\"ozy superlinear question for general norms.

\begin{problem}
    Does there exist an integer $d \in \mathbb{N}$, a constant $\delta \in (0, 1/2)$, and norm $\mathcal{N}$ on $\mathbb{R}^d$ such that $\lim_{X \rightarrow \infty} N_{\mathcal{N}}(X, \delta)/X = \infty$?
\end{problem}

Finally, let us mention a related problem first considered by Erd\H{o}s, Graham, and S\'ark\"ozy (see~\cite{Erd2}). For $X > 0$, what is the maximum measure $f(X)$ of a measurable set $A \subseteq B_2(0, X)$ with no integer distances, that is, such that $|x-y| \not \in \mathbb{N}$ for any distinct $x, y \in A$? The obvious upper bound of $O(X)$ is obtained by considering integer translates of the set along a fixed direction. A lower bound of $\Omega(X^{1/2-o(1)})$ follows by taking S\'ark\"ozy's construction from~\cite{Sar2} and placing an open ball of radius $\delta/2$ at each point, with $\delta \rightarrow 0$. Obtaining a sufficiently strong dependence on $\delta$ in Konyagin's bound $N_2(X,\delta) = O_\delta(X^{1/2})$ would imply sharper upper bounds for measurable sets that are unions of balls. It would also be interesting to understand the corresponding measurable problem in higher dimensions. We also mention here the work of Kurz and Mishkin~\cite{KM} who determine the maximum measure of open sets with $n$ connected components and no integer distances.

\bibliographystyle{amsplain}
\bibliography{bibliography}

\end{document}